\documentclass[11pt]{article}
\usepackage{amsmath,wasysym}
\usepackage{amsthm}
\usepackage{amssymb,amsfonts}
\usepackage{hyperref}
\usepackage{latexsym}
\usepackage{todonotes}
\usepackage{nicefrac}
\usepackage{epsfig}
\usepackage{stmaryrd}
\usepackage{setspace}
\usepackage{tikz-cd}
\usepackage{enumerate}
\usepackage[all]{xypic}
\usepackage{bbm,ifpdf,tikz,mathtools}
\usepackage{tikz-3dplot}

\usetikzlibrary{decorations.pathreplacing} 

\ifpdf
\usepackage{pdfsync}
\fi
\usetikzlibrary{positioning,matrix,arrows,decorations.pathmorphing}
\usepackage{enumitem}

\usetikzlibrary{positioning,arrows}
\usetikzlibrary{decorations.markings}
\usepackage{caption, subcaption}
\usetikzlibrary{arrows.meta,
                bbox}

\oddsidemargin=0pt
\evensidemargin=0pt
\topmargin=0in
\headheight=0pt
\headsep=0pt
\setlength{\textheight}{9in}
\setlength{\textwidth}{6.5in}

\newtheorem{theorem}{Theorem}[section]

\newtheorem{lemma}[theorem]{Lemma}
\newtheorem{proposition}[theorem]{Proposition}

\newtheorem{conjecture}[theorem]{Conjecture}
\newtheorem*{main-result}{Main-Result}
{
\theoremstyle{definition}

\newtheorem{example}[theorem]{Example}

\newtheorem{remark}[theorem]{Remark}

}

\oddsidemargin=0pt
\evensidemargin=0pt
\topmargin=0in
\headheight=0pt
\headsep=0pt
\setlength{\textheight}{9in}
\setlength{\textwidth}{6.5in}

\newcommand{\im}{\operatorname{im}}
\newcommand{\id}{\operatorname{id}}

\renewcommand{\dim}{\operatorname{dim}}

\newcommand{\excise}[1]{}

\newcommand{\cK}{\mathcal{K}}
\newcommand{\bigmid}{\;\Big{|}\;}

\newcommand{\Z}{{\mathbb{Z}}}
\newcommand{\Q}{{\mathbb{Q}}}
\newcommand{\R}{{\mathbb{R}}}

\newcommand{\Hom}{\operatorname{Hom}}
\newcommand{\cZ}{\mathcal{Z}}
\newcommand{\cI}{\mathcal{I}}
\newcommand{\cJ}{\mathcal{J}}

\newcommand{\la}{\lambda}
\renewcommand{\a}{\alpha}

\newcommand{\eps}{\epsilon}

\newcommand{\sat}{\operatorname{sat}}

\renewcommand{\(}{\left(}
\renewcommand{\)}{\right)}

\newcommand{\init}{\operatorname{in}}

\newcommand{\T}{\mathbb{T}}

\newcommand{\cO}{\mathcal{O}}

\renewcommand{\and}{\qquad\text{and}\qquad}

\newcommand{\Aut}{\operatorname{Aut}}

\newcommand{\Conf}{\operatorname{Conf}}

\newcommand{\Sym}{\operatorname{Sym}}
\newcommand{\OT}{\operatorname{OT}}

\newcommand{\IZ}{\operatorname{IZ}}

\newcommand{\Rees}{\operatorname{Rees}}
\newcommand{\rk}{\operatorname{rk}}
\newcommand{\gr}{\operatorname{gr}}

\newcommand{\Supp}{\operatorname{Supp}}
\newcommand{\bOT}{\overline{\OT}}

\usepackage{todonotes}

\begin{document}
\spacing{1.2}
\noindent{\Large\bf The geometry of zonotopal algebras I:\\ cohomology of graphical configuration spaces
}\\

\noindent{\bf Colin Crowley}\footnote{Supported by NSF grants DMS-2039316 
and DMS-1926686.}\\
Department of Mathematics, University of Oregon, Eugene, OR\\
and School of Mathematics, Institute for Advanced Study, Princeton, NJ
\vspace{.1in}

\noindent{\bf Galen Dorpalen-Barry}\footnote{Supported by NSF grant DMS-2039316.}\\
Department of Mathematics, Texas A\&M University, College Station, TX
\vspace{.1in}

\noindent{\bf Andr\'e Henriques}\\
Mathematical Institute, University of Oxford, Oxford, UK
\vspace{.1in}

\noindent{\bf Nicholas Proudfoot}\footnote{Supported by NSF grants DMS-1954050, DMS-2053243, and DMS-2344861.}\\
Department of Mathematics, University of Oregon, Eugene, OR\\

{\small
\begin{quote}
\noindent {\em Abstract.} 
Zonotopal algebras of vector arrangements are combinatorially-defined 
algebras with connections to approximation theory,
introduced by Holtz and Ron and independently by Ardila and Postnikov.
We show that the internal zonotopal algebra of a cographical vector 
arrangement is isomorphic to the cohomology ring of a certain 
configuration space introduced by Moseley, Proudfoot, and Young.  We also 
study an integral form of this algebra,
which in the cographical case is isomorphic to the integral cohomology ring.
Our results rely on interpreting the internal zonotopal 
algebra of a totally unimodular arrangement as an orbit harmonics ring, 
that is, as the associated graded of the ring of functions on a finite set of lattice points.
\end{quote} }

\section{Introduction}
Let $\Lambda$ be a finite rank lattice, $A$ a finite set, and $\chi:A\to\Lambda$ a map of sets whose image spans $\Lambda$.
We refer to such a structure as a {\bf vector arrangement}.  
We will assume that $\chi$ is {\bf totally unimodular}, meaning that,
for any subset $S\subset A$, the sublattice of $\Lambda$ spanned by $\chi(S)$ is saturated.
The polytope $$Z(\chi) := \left\{\sum_{a\in A} c_a \chi(a)\bigmid 0\leq c_a \leq 1\right\}\subset\Lambda_\R$$
is called the {\bf zonotope} of $\chi$.  This can be regarded as the Minkowski sum of the line segments defined by the elements $\chi(a)$,
or as the image of the unit cube under the map from $\R^A$ to $\Lambda_\R$ determined by $\chi$.
We will be particularly interested in the finite set $\cZ_-(\chi)$ of lattice points in the interior of $Z(\chi)$.

The {\bf internal zonotopal algebra} $\IZ(\chi;\Q)$ is an algebra whose
dimension is equal to the cardinality of $\cZ_-(\chi)$.
It was independently defined by Ardila and Postnikov \cite{ArPo} from a purely combinatorial perspective,
and by Holtz and Ron \cite{Zonotopal}, who were motivated by approximation theory.\footnote{They also
define an external zonotopal algebra and a central zonotopal algebra, whose dimensions are the number of
lattice points and the volume of $Z(A,\chi)$, respectively.  These algebras will not be addressed in this paper.}
Further study of these algebras appears in \cite{Lenz,ArPo-correction,Kirillov-Nenashev,Berget-reflection,RRT,Brodsky}.
The formal definition is as follows:
for any $\a\in\Lambda^*_\Q = \Hom(\Lambda,\Q)$, let $d(\a)$ be the number of elements $a\in A$ such that $\langle \a,\chi(a)\rangle \neq 0$.
Then
\begin{equation}\label{izdef}\IZ(\chi;\Q) := \Sym \Lambda^*_\Q \big{/} \langle \a^{d(\a)-1}\mid 0\neq \a \in\Lambda^*\rangle.\end{equation}

Our main result is concerned with the case where $\chi$ is the cographical vector arrangement associated with a graph $\Gamma$.
In this case, we prove that $\IZ(\chi;\Q)$ is isomorphic to the cohomology ring of a certain configuration space introduced by 
Moseley, Proudfoot, and Young \cite{MPY} (Theorem \ref{cohomology}).  
This result is the first step in resolving a conjecture from that paper (Conjecture \ref{MPY-conj}).
We also introduce an integral form $\IZ(\chi;\Z)$ of the algebra, and prove
that this coincides with the integral cohomology ring of the configuration space.


\subsection{Orbit harmonics and an integral form}\label{sec:new}
Let $R(\Lambda; \Q) := \Sym\Lambda^*_\Q$ be the graded ring of $\Q$-valued polynomial functions on $\Lambda$, and let 
$R_i(\Lambda,\Q)\subset R(\Lambda; \Q)$ be the subgroup consisting of polynomials of degree at most $i$.
Let $R(\chi; \Q)$ be the ring of $\Q$-valued functions on the finite set $\cZ_-(\chi)$.
This is a boring ring (it is a direct sum of finitely many copies of $\Q$), but it is endowed with an interesting filtration
$$0\subset R_0(\chi;\Q) \subset R_1(\chi;\Q) \subset R_2(\chi;\Q) \subset \cdots \subset R(\chi,\Q),$$
where $R_i(\chi;\Q)$ is the image of $R_i(\Lambda,\Q)$ under the restriction map.
The analogous filtration can be defined on the set of functions of any finite subset of a vector space, and 
the associated graded ring is called the {\bf orbit harmonics ring}; see for example \cite{CMR}.
Geometrically, it is the coordinate ring of the scheme-theoretic limit of the set $t\cdot \cZ_-(\chi)\subset \Lambda_\Q$ as $t$ goes to zero.
The following proposition both motivates the definition of the internal zonotopal algebra and provides an explanation for the fact that its total dimension
is equal to the cardinality of $\cZ_-(\chi)$.

\begin{proposition}\label{Q-version}
The internal zonotopal algebra $\IZ(\chi;\Q)$ is isomorphic to the orbit harmonics ring 
$$\gr R(\chi;\Q) := \bigoplus_{i\geq 0} R_i(\chi;\Q)/R_{i-1}(\chi;\Q).$$
\end{proposition}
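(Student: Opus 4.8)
The plan is to exhibit an explicit surjection $\IZ(\chi;\Q)\twoheadrightarrow \gr R(\chi;\Q)$ of graded algebras and then argue that both sides have the same total dimension, namely $|\cZ_-(\chi)|$; a surjective map of finite-dimensional vector spaces of equal dimension is an isomorphism. The right-hand side has total dimension $|\cZ_-(\chi)|$ essentially by construction: the orbit harmonics ring $\gr R(\chi;\Q)$ is the associated graded of the finite-dimensional ring $R(\chi;\Q)$ of functions on $\cZ_-(\chi)$ with respect to the degree filtration, and passing to the associated graded of a filtered vector space preserves dimension. (Strictly, one should note that the filtration is exhausting, i.e. $R_i(\chi;\Q)=R(\chi;\Q)$ for $i\gg 0$, since polynomials of sufficiently high degree separate points of the finite set $\cZ_-(\chi)$.) The total dimension of $\IZ(\chi;\Q)$ being $|\cZ_-(\chi)|$ is the classical theorem of Holtz--Ron and Ardila--Postnikov quoted in the introduction, which I will take as known.

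First I would construct the map. The surjection $R(\Lambda;\Q)=\Sym\Lambda^*_\Q\twoheadrightarrow R(\chi;\Q)$ given by restriction of polynomial functions to $\cZ_-(\chi)$ is filtered (it sends $R_i(\Lambda;\Q)$ onto $R_i(\chi;\Q)$ by definition of the target filtration), so it induces a surjection of associated graded algebras $\Sym\Lambda^*_\Q\twoheadrightarrow\gr R(\chi;\Q)$, where the source carries its standard grading. It remains to check that this map kills the ideal $\langle \a^{d(\a)-1}\mid 0\neq\a\in\Lambda^*\rangle$ defining $\IZ(\chi;\Q)$, so that it factors through $\IZ(\chi;\Q)$. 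Equivalently, for each nonzero $\a\in\Lambda^*$ I must show that the degree-$(d(\a)-1)$ homogeneous part of the polynomial function $\a^{d(\a)-1}$, viewed in $\gr R(\chi;\Q)$, vanishes; it suffices to show that the restriction of $\a^{d(\a)-1}$ to $\cZ_-(\chi)$ lies in $R_{d(\a)-2}(\chi;\Q)$, i.e. agrees on $\cZ_-(\chi)$ with a polynomial of strictly smaller degree.

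The crux is this last divisibility/agreement statement, and I expect it to be the main obstacle. The idea is combinatorial-geometric: fix $0\neq\a\in\Lambda^*$ and let $A_0=\{a\in A:\langle\a,\chi(a)\rangle\neq 0\}$, so $|A_0|=d(\a)$. The linear functional $\a$ takes only finitely many integer values on the lattice points of the zonotope $Z(\chi)$; I claim the number of distinct values of $\a$ on the \emph{interior} lattice points $\cZ_-(\chi)$ is at most $d(\a)-1$. Granting this, the univariate polynomial $\prod (\a - c)$ over those values $c$ vanishes identically on $\cZ_-(\chi)$; expanding, $\a^{d(\a)-1}$ restricted to $\cZ_-(\chi)$ equals a $\Q$-linear combination of lower powers of $\a$, hence lies in $R_{d(\a)-2}(\chi;\Q)$, which is exactly what we need. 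To prove the claim I would decompose $Z(\chi)$ as a Minkowski sum: writing $Z(\chi)=Z'\oplus Z_0$ where $Z_0$ is the zonotope of $\chi|_{A_0}$ (on which $\a$ is injective up to the segment directions) and $Z'$ is the zonotope of the complementary vectors (on which $\a$ is constant), one sees that $\a$ ranges over an integer interval of length $\sum_{a\in A_0}|\langle\a,\chi(a)\rangle|$ on $Z(\chi)$, but on the relative interior the extreme values in each of the $d(\a)$ "directions" of $A_0$ are lost; total unimodularity is used to guarantee that consecutive integer values are all attained and to control the boundary. Carefully bounding the count of interior values by $d(\a)-1$ is the technical heart; once it is in place, combining the surjection with the dimension count from Holtz--Ron / Ardila--Postnikov finishes the proof.
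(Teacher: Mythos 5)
Your overall strategy (construct a surjection $\IZ(\chi;\Q)\twoheadrightarrow\gr R(\chi;\Q)$ and compare dimensions using the Holtz--Ron / Ardila--Postnikov count) is exactly the paper's strategy, and the reduction of the problem to showing that the restriction of $\a^{d(\a)-1}$ to $\cZ_-(\chi)$ lies in $R_{d(\a)-2}(\chi;\Q)$ is correct. However, your proposed proof of that step contains a genuine gap. You claim that for \emph{every} nonzero $\a\in\Lambda^*$, the functional $\a$ takes at most $d(\a)-1$ distinct values on $\cZ_-(\chi)$. This is false. In the house-graph example of Section~2.5, take $\a = \a_{C_4} + 3\,\a_{C_3}$, i.e.\ $\a(p,q) = p+3q$ in the paper's coordinates. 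One checks $\langle\a,\chi(a)\rangle\in\{1,3,4\}$ for all six $a\in A$, so $d(\a)=6$ and $d(\a)-1=5$; but on $\cZ_-(\chi) = \{1,2,3\}\times\{1,2\}$, the function $p+3q$ takes the six distinct values $4,5,6,7,8,9$. The bound you need therefore fails for this $\a$, and the Minkowski-sum heuristic you sketch does not repair it: for non-cocircuit $\a$ the pairings $\langle\a,\chi(a)\rangle$ need not lie in $\{0,\pm 1\}$, and the interval of attained values can be strictly longer than $d(\a)-1$.

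The correct route (which the paper takes) is first to reduce to cocircuits: the ideal $\langle\a^{d(\a)-1}\mid 0\neq\a\in\Lambda^*\rangle$ equals the ideal generated by $\a^{d(\a)-1}$ over cocircuits $\a$ only, a nonobvious fact cited to \cite[Lemma~1]{ArPo-correction}. Once you restrict attention to cocircuits, total unimodularity gives $\langle\a,\chi(a)\rangle\in\{-1,0,1\}$, and then the facet description of the zonotope (Equation~\eqref{facets}) shows that $\a$ takes values in the open integer interval $(-d_-(\a),\,d_+(\a))$ on $\cZ_-(\chi)$, which has exactly $d(\a)-1$ elements. At that point the vanishing of $\binom{\a + d_-(\a)-1}{d(\a)-1}$ on $\cZ_-(\chi)$, whose top-degree part is a scalar multiple of $\a^{d(\a)-1}$, gives the relation you want, and the rest of your argument goes through unchanged. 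So the missing ingredient is precisely the reduction to cocircuits; without it the distinct-values claim you rely on is not true.
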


\begin{remark}\label{RRT graphical}
Proposition \ref{Q-version} is equivalent to \cite[Theorem 5.10]{Zonotopal}, and also to \cite[Proposition 3.5]{RRT} when the vector arrangement is graphical.
However, the three papers use sufficiently different languages that we will provide our own short proof in Section \ref{sec:Q}.
\end{remark}

\begin{remark}
Lenz \cite[Theorem 2]{Lenz} provides an isomorphism between the linear dual of $\IZ(\chi;\Q)$ and $R(\chi;\Q)$
that is unrelated to Proposition \ref{Q-version}.
Lenz considers the {\bf box spline} $M_\chi$, a piecewise polynomial function on $\Lambda_\R$
supported on $Z(\chi)$, whose value at a point $z\in Z(\chi)$ is proportional to the volume of the fiber over $z$ of the natural map $[0,1]^A\to Z(\chi)$.
He identifies the dual of $\IZ(\chi;\Q)$ with a space of differential operators on $R(\Lambda;\Q)$, applies these differential operators to $M_\chi$,
and shows that the restrictions of the resulting functions to $\cZ_-(\chi)$ form a basis for $R(\chi;\Q)$.  This statement was originally
conjectured by Holtz and Ron \cite[Conjecture 1.8]{Zonotopal}, and was one of their primary motivations for introducing the algebra.
\end{remark}

One consequence of Proposition \ref{Q-version} is that it suggests a canonical integral structure on the internal zonotopal algebra.
Let $R(\chi;\Z)$ be the graded ring of $\Z$-valued functions on $\cZ_-(\chi)$, and let $R_i(\chi;\Z) := R_i(\chi;\Q)\cap R(\chi;\Z)$.
Motivated by Proposition \ref{Q-version}, we define
$$\IZ(\chi;\Z) := \gr R(\chi;\Z) = \bigoplus_{i\geq 0} R_i(\chi;\Z)/R_{i-1}(\chi;\Z),$$
which has the property that $\IZ(\chi;\Z)\otimes\Q\cong \IZ(\chi;\Q)$.

\begin{remark}
Why this integral form, and not some other one?  For example, we could have defined $\IZ(\chi;\Z)$ to be the quotient
of $\Sym \Lambda^*$ by the ideal $\langle \a^{d(\a)-1}\mid 0\neq \a\in\Lambda^*\rangle$, which is not isomorphic
to the ring we have defined (see Example \ref{ex:cycle}).
Our main motivation is the cohomological interpretation that will appear in Theorem \ref{cohomology}
for cographical vector arrangements.
\end{remark}

The ring $R(\chi;\Z)$ is a {\bf filtered binomial ring}, meaning that for any function $\eta\in R_i(\chi;\Z)$ and any natural number $m$,
it makes sense to talk about the binomial coefficient $\binom{\eta}{m}\in R_{mi}(\chi;\Z)$.\footnote{A binomial ring is the same as a $\la$-ring
for which every Adams operations is the identity \cite[Theorem 5.3]{Yau}.}  This induces on $\IZ(\chi;\Z)$ the structure of a {\bf divided powers ring}:
for each $m\geq 0$ and each $e\in \IZ^i(\chi;\Z)$ with $i>0$, we have a distinguished element $e^{[m]}\in \IZ^{mi}(\chi;\Z)$ such that $$m!\, e^{[m]} = e^m.$$
Concretely, the element $e^{[m]}$ is obtained by lifting $e$ to $\eta\in R_i(\chi;\Z)$, taking a binomial coefficient $\binom{\eta}{m}\in R_{mi}(\chi;\Z)$,
and projecting back to $\IZ^{mi}(\chi;\Z)$.  
Since $R_i(\chi;\Z)$ is by definition saturated, $\IZ(\chi;\Z)$ is torsion-free, as an abelian group, thus its divided powers structure is unique.

A divided powers algebra is almost never generated as a ring in degree 1.
The following proposition, which we prove in Section \ref{sec:filtrations}, 
says that it is generated as a ring by classes of the form $e^{[m]}$ for $e\in \IZ^1(\chi)$ and $m\geq 0$.

\begin{proposition}\label{Z-version}
The ring $\IZ(\chi;\Z)$ is generated in degree $1$ as a divided powers ring.
\end{proposition}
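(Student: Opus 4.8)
The plan is to work directly with the filtered binomial ring $R(\chi;\Z)$ and its associated graded $\IZ(\chi;\Z) = \gr R(\chi;\Z)$. First I would fix a set of lattice-point coordinate functions: choosing a basis of $\Lambda^*$ gives coordinates $x_1,\dots,x_r$, and their images in $R_1(\chi;\Z)$ together with the constants span $R_1(\chi;\Z)$ (over $\Z$, after possibly enlarging by finitely many more linear functionals on $\Lambda$ if the restriction map is not surjective onto a saturated sublattice — but since $R_i(\chi;\Z)$ is defined to be saturated, $R_1(\chi;\Z)$ is exactly the saturation of the span of $1,x_1,\dots,x_r$ in $R(\chi;\Z)$). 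The leading terms of these generate $\IZ^1(\chi;\Z)$. The goal is to show every element of $\IZ^i(\chi;\Z)$ is a $\Z$-linear combination of products $e_1^{[m_1]}\cdots e_k^{[m_k]}$ with $e_j\in\IZ^1(\chi;\Z)$ and $\sum m_j = i$.

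The key step is to lift the statement to $R(\chi;\Z)$: it suffices to show that $R_i(\chi;\Z)$ is spanned over $\Z$ by the constant $1$ together with products $\binom{\eta_1}{m_1}\cdots\binom{\eta_k}{m_k}$ where each $\eta_j\in R_1(\chi;\Z)$ and $\sum m_j \le i$. Indeed, if we know this, then passing to associated graded and extracting top-degree parts immediately gives that $\IZ^i(\chi;\Z)$ is spanned by the corresponding divided-power monomials $e_1^{[m_1]}\cdots e_k^{[m_k]}$ (the leading term of a product of binomial coefficients is the product of the $e_j^{[m_j]}$, by the very construction of the divided power structure described in the excerpt). So the problem reduces to a statement purely about the binomial ring of $\Z$-valued functions on the finite set $\cZ_-(\chi)$: that its degree-$\le i$ filtered piece is generated, as a $\Z$-module, by products of binomial coefficients of degree-$\le 1$ functions. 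This in turn follows from the general fact that the ring of $\Z$-valued functions on any finite set $S\subset\Z^r$ is, as a filtered binomial ring, generated in filtration degree $1$: one proves by induction on $i$ that any $\Z$-valued polynomial of degree $\le i$ that is integer-valued on $S$ can be written as a $\Z$-combination of products of binomial coefficients $\binom{\ell}{m}$ of integer-affine-linear forms $\ell$, using that the integer-valued polynomials in one variable have $\Z$-basis $\{\binom{x}{m}\}$ and that multivariate integer-valued polynomials are generated by products of such — combined with the saturation built into the definition of $R_i(\chi;\Z)$ to pass from "integer-valued on all of $\Z^r$" to "integer-valued on $S$."

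The main obstacle is the last reduction: showing that $R_i(\chi;\Z)$, which is defined as $R_i(\chi;\Q)\cap R(\chi;\Z)$ (functions on $\cZ_-(\chi)$ extending to degree-$\le i$ rational polynomials and taking integer values on $\cZ_-(\chi)$), is actually generated over $\Z$ by products of binomial coefficients of \emph{integral} linear forms. The subtlety is that an integer-valued-on-$S$ rational polynomial need not be integer-valued on all of $\Z^r$ and need not have integral coefficients; one must argue that modulo functions vanishing on $\cZ_-(\chi)$ (which do not matter after restriction) one can always adjust to the classical integer-valued polynomial ring. I expect this to follow by downward induction on $i$ together with a careful use of the Lagrange-interpolation description of functions on the finite set $\cZ_-(\chi)$, but it is the step that requires genuine care rather than formal manipulation. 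Once that is in hand, Proposition \ref{Z-version} follows by the associated-graded argument above.
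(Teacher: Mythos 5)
Your reduction from the proposition to the statement that $R_i(\chi;\Z)$ is spanned by products of binomial coefficients of degree-$\le 1$ functions is sound, and this is essentially the same reduction the paper uses. However, the key input you need is left essentially unproved, and the route you sketch for it would fail. You appeal to a ``general fact that the ring of $\Z$-valued functions on any finite set $S\subset\Z^r$ is, as a filtered binomial ring, generated in filtration degree $1$,'' combining integer-valued polynomial theory with ``the saturation built into the definition of $R_i(\chi;\Z)$.'' This general fact is false. Take $S=\{0,1,3\}\subset\Z$ and $f(x)=x(x-1)/6$: then $f$ is a degree-$2$ rational polynomial taking the values $(0,0,1)$ on $S$, so $f\in R_2(S;\Q)\cap R(S;\Z)$, but $R_1(S;\Z)=\Z\{1,x\}$ and the $\Z$-span of products of binomial coefficients of such functions of total degree $\le 2$ restricts to $\Z\{(1,1,1),(0,1,3),(0,0,3)\}$, which does not contain $(0,0,1)$. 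The paper itself flags this phenomenon in the remark following Proposition \ref{saturation} with the example $\{0,2\}\subset\Z$ and the function $x/2$.

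What is actually needed is Proposition \ref{saturation}: the saturated filtration $R_i(\chi;\Z)=R_i(\chi;\Q)\cap R(\chi;\Z)$ coincides with the filtration $\tilde R_i(\chi;\Z)$ by images of genuine integer-valued polynomials $R_i(\Lambda;\Z)$. This is a special feature of interior lattice points of the zonotope of a totally unimodular $\chi$, not of arbitrary finite subsets of a lattice, and it is proved in the paper by induction on $|A|$ via deletion and contraction, combined with a dimension count coming from the Tutte-polynomial recursion (Lemma \ref{Riq} and the Five Lemma). Your suggestion of ``downward induction on $i$ together with a careful use of the Lagrange-interpolation description of functions on $\cZ_-(\chi)$'' does not engage with this structure and would not distinguish $\cZ_-(\chi)$ from the bad sets above. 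Once Proposition \ref{saturation} is available, the remainder of your argument (lift to $R_i(\Lambda;\Z)$, expand in products of binomial coefficients $\binom{\alpha_1}{i_1}\cdots\binom{\alpha_k}{i_k}$ with $\alpha_j\in\Lambda^*$, pass to associated graded) is exactly the paper's proof.
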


\begin{example}\label{ex:cycle}
Suppose that $\Lambda = \Z$, $A = \{1,\ldots,k\}$, and $\chi:A\to\Lambda$ takes every element of $A$ to the element $1\in\Lambda$.
Then $Z(\chi) = [0,k]$ and $\cZ_-(\chi) = \{1,2,\ldots,k-1\}$.
There exists an element $e\in \IZ^1(\chi;\Z)$ such that $\IZ(\chi;\Z) = \Z\{1,e,e^{[2]},\ldots,e^{[k-2]}\}$ is the truncation to degree $k-2$ of the free divided powers ring in one variable.
Note that $\IZ(\chi;\Z)$ is not generated {\em as a ring} by the element $e$; for example, the class $e^{[2]} = e^2/2$ is not contained in the subring generated by $e$.
\end{example}

\subsection{Graphical configuration spaces}
Let $\Gamma = (V,A,h,t)$ be a directed graph: $V$ is the set of vertices, $A$ is the set of arrows,
and $h,t:A\to V$ are the head and tail maps.  Let $\Lambda = H^1(\Gamma;\Z)$.  For each arrow $a\in A$, we have a map to the (oriented) circle
given by contracting all other arrows, and pulling back the generator of $H^1(S^1;\Z)$ along this map defines a cohomology class 
$\chi_\Gamma^!(a)\in\Lambda$.  We refer to $\chi_\Gamma^!$ as a {\bf cographical vector arrangement}.
Its associated matroid is a cographical matroid, and it is Gale dual to the vector arrangement typically associated with a graph
(see Section \ref{sec:OT}).
Let $$\cZ_-(\Gamma) := \cZ_-(\chi_\Gamma^!),\qquad
R(\Gamma;\Z) := R(\chi_\Gamma^!;\Z)\and
\IZ(\Gamma;\Z) := \IZ(\chi_\Gamma^!;\Z).$$

Let $G$ be a Lie group, and  
let $\Conf(\Gamma,G)$ denote the space of maps from the vertex set into $G$
with the property that adjacent vertices map to distinct elements of $G$.  Let 
$$X(G,\Gamma) := \Conf(\Gamma,G)/G^{\pi_0(\Gamma)},$$
where $G^{\pi_0(\Gamma)}$ acts by left translation on the various connected components.
Note that the space $X(G,\Gamma)$ does not depend on the orientations of the edges.

\begin{example}
Let $\Gamma = K_{n}$ be the complete graph on the vertex set $[n]$, and let $G = SU(2)$.
Using the fact that $SU(2)\setminus\{\id\}\cong\R^3$, we may translate the last point to the identity
and thus identify $X(G,\Gamma)$ with the configuration space
of $n-1$ distinct labeled points in $\R^3$.  A feature of this construction is that it demonstrates that the natural action of $S_{n-1}$
on the configuration space of $n-1$ points in $\R^3$ extends in a canonical way to an action of $S_{n}$.
When $\Gamma$ is does not have a vertex adjacent to every other vertex (a cone vertex), this space is much harder to analyze.
\end{example}

\begin{remark}
In Section \ref{sec:homotopy}, we make use of the fact that $X(G,\Gamma)$ is diffeomorphic to the space $Y(G,\Gamma)$ of balanced $G$-gain graphs with underlying graph $\Gamma$ and all coefficients nontrivial.
\end{remark}

The following theorem is our main result; the proof appears in Section \ref{sec:proof}.

\begin{theorem}\label{cohomology}
There is a canonical degree-halving isomorphism
$$H^*(X(SU(2),\Gamma);\Z) \cong \IZ(\Gamma;\Z).$$
Moreover, if $\T := U(1)/\{\pm 1\}$ acts on $X(SU(2),\Gamma)$ by inverse right multiplication,
then we have a canonical degree-preserving isomorphism
$$H^*_\T(X(SU(2),\Gamma);\Z) \cong 
\Rees R(\Gamma;\Z) := \bigoplus_{i\geq 0}^\infty u^i R_i(\Gamma;\Z) \subset R(\Gamma;\Z[u])$$
of graded algebras over $\Z[u]\cong H^*_\T(*;\Z)$ (with $\deg u = 2$).
\end{theorem}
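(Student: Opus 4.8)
The plan is to reduce Theorem \ref{cohomology} to two inputs: a topological model for $X(SU(2),\Gamma)$ and its $\T$-equivariant cohomology, and the algebraic identification of the resulting ring with $\IZ(\Gamma;\Z)$, for which Propositions \ref{Q-version} and \ref{Z-version} have already laid the groundwork. First I would exploit the diffeomorphism $X(SU(2),\Gamma)\cong Y(G,\Gamma)$ mentioned in the remark: a balanced $SU(2)$-gain graph with all nontrivial coefficients, together with the fact that $SU(2)\setminus\{\mathrm{id}\}\cong\R^3$, exhibits $X(SU(2),\Gamma)$ as the complement of a subspace arrangement in a real vector space closely tied to $H^1(\Gamma;\R)$. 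More precisely, after fixing an orientation and trivializing one vertex per component, a configuration is a point of $\prod_{a\in A}\R^3$ constrained to lie in the image of $H^1(\Gamma;\R)\otimes\R^3$, avoiding the "diagonals" where some cycle-class vanishes; this should identify $X(SU(2),\Gamma)$ with the complement in $\Lambda_\R\otimes\R^3$ of the union of the codimension-$3$ subspaces $\{\alpha^\perp\otimes\R^3\}$ for $\alpha$ ranging over the hyperplanes of the cographical arrangement $\chi_\Gamma^!$. This is exactly the $3$-dimensional (or "$\R^3$") subspace arrangement whose cohomology was computed by Moseley--Proudfoot--Young \cite{MPY}, so the needed topology is available off the shelf.

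Next I would compute $H^*(X(SU(2),\Gamma);\Z)$ and its $\T$-equivariant analogue. The key observation is that the residual circle action: $\T = U(1)/\{\pm1\}$ acting by inverse right multiplication on $SU(2)$, under the identification $SU(2)\setminus\{\mathrm{id}\}\cong\R^3$, is the standard rotation action on $\R^2\subset\R^3$ fixing the third coordinate; equivalently $\T$ acts on each $\R^3$ factor as $\R^2\oplus\R$ with weight one. Then the projection $\Lambda_\R\otimes\R^3\to\Lambda_\R$ onto the "real" coordinate (the third summand in each factor), restricted to the arrangement complement, should exhibit the $\T$-equivariant homotopy type in terms of the \emph{real} hyperplane complement of $\chi_\Gamma^!$, i.e.\ in terms of the chambers, and the Borel construction should collapse to something governed by $\R_i(\Gamma;\Z)$. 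Concretely, I expect a Leray--Hirsch / Mayer--Vietoris argument (as in \cite{MPY}) to give an additive basis for $H^*_\T$ indexed by the lattice points $\cZ_-(\Gamma)$, with the filtration by the equivariant parameter $u$ matching the polynomial-degree filtration defining $\Rees R(\Gamma;\Z)$. Setting $u=0$ (restricting to the fiber, i.e.\ forgetting the $\T$-action) then recovers the nonequivariant statement, and the degree halving comes from the fact that each generator lives in a codimension-$3$ piece contributing to $H^2$ rather than $H^1$ — matching the degree-$1$ generators of $\IZ(\Gamma;\Z)$.

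The identification with $\IZ(\Gamma;\Z)$ as \emph{rings} is where I would lean hardest on the earlier propositions. Rationally, the cohomology of this subspace arrangement complement is an Orlik--Solomon-type quotient of $\Sym\Lambda^*_\Q$ — indeed the Moseley--Proudfoot--Young computation presents $H^*$ in degrees matching $\IZ(\chi;\Q)$ — and Proposition \ref{Q-version} together with \eqref{izdef} pins this down as $\IZ(\Gamma;\Q)$. For the integral statement I would show that the equivariant cohomology $H^*_\T$ is torsion-free (it admits a lattice-point basis over $\Z[u]$) and that the natural map to $\Rees R(\Gamma;\Z)$ — sending an equivariant class to the family of its values on the chambers / lattice points, scaled by appropriate powers of $u$ — is a ring map that is an isomorphism after $\otimes\Q$ between two torsion-free $\Z[u]$-modules with the same Hilbert series; hence an isomorphism. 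The multiplicativity of this map is essentially the statement that cup product on the arrangement complement corresponds to pointwise multiplication of functions on $\cZ_-(\Gamma)$ in the associated graded, which is the content of orbit harmonics. Proposition \ref{Z-version} guarantees the image is generated in degree $1$ as a divided powers ring, matching the divided powers structure on $\IZ(\Gamma;\Z)$ coming from the filtered binomial ring $R(\Gamma;\Z)$ (and on the cohomology side from the integral nature of $H^*$ of a space).

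The main obstacle I anticipate is not the rational computation but controlling \emph{integrally}: proving that $H^*_\T(X(SU(2),\Gamma);\Z)$ is torsion-free and that the comparison map to $\Rees R(\Gamma;\Z)$ is surjective onto the correct integral lattice, rather than a finite-index sublattice. The danger is exactly the one flagged in the remark after Proposition \ref{Z-version} — the "naive" integral quotient of $\Sym\Lambda^*$ differs from $\IZ(\Gamma;\Z)$ — so I must verify that the topologically-defined integral structure is the \emph{saturated} one, i.e.\ the $R_i(\Gamma;\Z) = R_i(\Gamma;\Q)\cap R(\Gamma;\Z)$ version. I expect this to follow from the fact that the equivariant cohomology of a nice space is automatically a finitely generated free $\Z[u]$-module in this situation (the space is equivariantly formal with respect to $\T$, its fixed locus being a disjoint union of points indexed by $\cZ_-(\Gamma)$ once one checks the fixed points of the $\T$-action on the arrangement complement), and equivariant formality forces the restriction-to-fixed-points map $H^*_\T \to H^*_\T(X^\T) = \bigoplus_{\cZ_-(\Gamma)} \Z[u]$ to be injective with image precisely $\Rees R(\Gamma;\Z)$ by the usual localization / GKM bookkeeping. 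Carefully pinning down $X(SU(2),\Gamma)^\T$ and checking equivariant formality is the crux; everything else is then either \cite{MPY} or the already-proved algebra.
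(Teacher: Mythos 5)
Your overall architecture — exhibit the cohomology of $X(SU(2),\Gamma)$ via the $\T$-action, restrict to the fixed locus, and match the equivariant filtration with the polynomial-degree filtration on $R(\Gamma;\Z)$ — is aligned with the paper's strategy. The key machinery you invoke (equivariant formality, restriction-to-fixed-points injectivity, the Rees algebra of the filtration) corresponds to the paper's Theorem~\ref{Rees} and Section~\ref{sec:ec}. However, there are two substantial gaps.

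First, the subspace-arrangement model is wrong in general. You claim $X(SU(2),\Gamma)$ is the complement in $\Lambda_\R\otimes\R^3$ of codimension-$3$ coordinate-style subspaces. This is only true when $\Gamma$ has a cone vertex (a vertex adjacent to all others), because only then can one trivialize so that all remaining vertices land in $SU(2)\setminus\{\id\}\cong\R^3$ with the adjacency constraints becoming \emph{linear}. For general $\Gamma$ the cocycle conditions defining $Y(SU(2),\Gamma)$ involve products in the nonabelian group $SU(2)$ and do not linearize; the paper explicitly flags (in its first example) that the non-cone case is ``much harder to analyze.'' Relatedly, the MPY reference does not supply the cohomology of $X(SU(2),\Gamma)$ ``off the shelf'' — that computation is exactly what Proposition~\ref{additive} of this paper establishes (by a deletion–contraction long exact sequence, Proposition~\ref{LES}), and the MPY paper only \emph{conjectured} the answer as a graded vector space.

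Second, and more importantly, your argument never actually pins down the equivariant filtration. You observe that the restriction map $H^*_\T(X)\to H^*_\T(X^\T)\cong\bigoplus_{\cZ_-(\Gamma)}\Z[u]$ is injective and hope ``GKM bookkeeping'' identifies the image with $\Rees R(\Gamma;\Z)$, but nothing in the general GKM formalism says the image matches the \emph{polynomial-degree} filtration on functions on $\cZ_-(\Gamma)$. That identification is the heart of the paper's proof: Lemma~\ref{heart} shows, by constructing an explicit equivariant line bundle $L$ on $X(SU(2),C_k)$, choosing its equivariant structure so the $\T$-weight on the first fixed component is $-|C^-|+1$, and then computing weights across consecutive components via explicit $2$-spheres $\xi_\varepsilon$ and the local model of Example~\ref{cp1}, that the degree-$1$ function $\eta=\theta$ lies in $F_1$. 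The general case then reduces to cycles by pulling back along $C_k\hookrightarrow\Gamma$, using Proposition~\ref{saturation} to know that $R_1(\Gamma;\Z)$ is generated by the cycle classes $\eta_C$. After that the dimension counts (Equation~\eqref{IZ-Tutte} and Proposition~\ref{additive}, both giving $t^{\rk\Gamma}T_\Gamma(t^{-1},0)$) force the filtration inclusions to be equalities over $\Q$ and then over $\Z$ by torsion-freeness. Without some version of Lemma~\ref{heart} — an explicit weight computation relating the topological line bundle to the combinatorial degree filtration — your proposal has no mechanism to rule out that $F_1$ differs from $\theta^*R_1(\Gamma;\Z)$ by a shift or scaling.
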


\begin{remark}
This is one of many examples in the literature where an orbit harmonics ring is isomorphic
to the cohomology ring of a space; see e.g. \cite{GM,kosdef,Pawlowski-Rhoades,CMR,GLW}. 
The distinctly new feature of Theorem \ref{cohomology} is that the orbit harmonics interpretation holds over the integers,
thus exhibiting a divided powers structure on the integral cohomology ring.
\end{remark}

\begin{remark}\label{automorphisms}
All of the isomorphisms in Theorem \ref{cohomology} are canonical and therefore equivariant with respect to the action of the group
of automorphisms of $\Gamma$.  The representation category of a finite group over the rational numbers is semisimple,
which means that the representation
$$H^*(X(SU(2),\Gamma);\Q) \cong \IZ(\Gamma;\Q) \cong \gr R(\Gamma;\Q)$$
becomes isomorphic, after forgetting the grading, to the permutation representation $R(\Gamma;\Q)$ with basis given by the finite set $\cZ_-(\Gamma)$.
\end{remark}

\begin{remark}
Consider the special case where $\Gamma = C_{k}$ is the cycle of length $k$ for some even number $k$.
In this case, we can define a natural inclusion from $X(SU(2),C_{k})$ to
the based loop space $\Omega SU(2) := LSU(2)/SU(2)$.\footnote{The algebro-geometric version of $\Omega SU(2)$
is known as the affine Grassmannian for $SU(2)$, and is homotopy equivalent to $\Omega SU(2)$.}
Given a function $\kappa:\{1,\ldots,k\}\to SU(2)$ defining an element of $X(SU(2),C_{k})$, 
we consider the piecewise geodesic loop that starts at $\kappa(1)$, takes the minimal geodesic
in $SU(2)\cong S^3$ to $-\kappa(2)$, then to $\kappa(3)$, then $-\kappa(4)$, and so on, eventually returning to $\kappa(1)$.  
The fact that $\kappa(i)\neq \kappa(i+1)$ guarantees
that $-\kappa(i+1)$ is not the antipode of $\kappa(i)$, so the minimal geodesic is well defined.
The cographical vector arrangement $\chi_{C_k}^!$ is the one appearing in Example \ref{ex:cycle},
so Theorem \ref{cohomology} says that the cohomology of $X(SU(2),C_{k})$ is as described in that example: the
truncation to degree $k-2$ of the free divided powers ring in one variable. 
The cohomology ring of $\Omega SU(2)$ is isomorphic to the free divided powers ring in one variable \cite{HHH},
and the restriction map on cohomology is the truncation map.

When $k$ is odd, the analogous map lands not in $\Omega SU(2)$, but rather in $\Omega SO(3)$, which is homeomorphic
to a disjoint union of two copies of $\Omega SU(2)$.  More generally, for an arbitrary graph $\Gamma$, we obtain
an inclusion from $X(SU(2),\Gamma)$ to the space of maps from the geometric realization of $\Gamma$ to $SO(3)$, modulo
the action of $SO(3)^{\pi_0(\Gamma)}$.  Each connected component of this space is homotopy equivalent to 
the $g^\text{th}$ power of $\Omega SU(2)$, where $g = \dim H^1(\Gamma;\Q)$.
This provides additional insight into why the cohomolology ring of $X(SU(2),\Gamma)$ should carry the structure of a divided powers ring.
\end{remark}

\subsection{The reduced Orlik--Terao algebra}\label{sec:OT}
We conclude the introduction by recording one of our primary motivations for these results, which is the subject
of a companion paper \cite{ZA2}.  
Assume that $\chi(a)\neq 0$ for all $a\in A$, and 
let $\OT(\chi;\Q)$ be the subring of rational functions on $\Lambda^*_\Q$ generated by the functions $\{\chi(a)^{-1}\mid a\in A\}$.
This is known as the {\bf Orlik--Terao algebra} of $\chi$.
There is a natural map from $\Lambda^*_\Q$ to $\OT(\chi)$ taking $\a$ to $\sum_{a} \langle\a,\chi(a)\rangle \chi(a)^{-1}$, and the quotient 
$\bOT(\chi)$ of $\OT(\chi)$ by the ideal generated by the image of this map is called the {\bf reduced Orlik--Terao algebra}.

Let $\Gamma$ be a directed graph, and let $\chi_\Gamma:A\to \Z^A/H_1(\Gamma;\Z)$ be the {\bf graphical vector arrangement}.
This arrangement is Gale dual to the cographical vector arrangement $\chi_\Gamma^!$, and in particular the two associated matroids are dual to each other.
Let 
$\OT(\Gamma) := \OT(\chi_\Gamma)$ and $\bOT(\Gamma) := \bOT(\chi_\Gamma)$.
From the construction, is evident that $\bOT(\Gamma)$ is a representation of the group of automorphisms of $\Gamma$ as a directed graph.
With some care, one can show that in fact the group of automorphisms of the underlying undirected graph also acts on $\bOT(\Gamma)$.
We denote the larger group by $\Aut(\Gamma)$; this group also acts on $H^*(X(SU(2),\Gamma);\Q))$, as noted in Remark \ref{automorphisms}.

\begin{conjecture}\label{MPY-conj}{\em \cite[Conjecture 2.16]{MPY}}
For any graph $\Gamma$, there exists a degree-halving isomorphism of graded $\Aut(\Gamma)$-representations
$$H^*(X(SU(2),\Gamma);\Q)) \cong \bOT(\Gamma).$$
\end{conjecture}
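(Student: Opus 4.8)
\textbf{Reduction.} By Theorem~\ref{cohomology} there is a canonical degree-halving isomorphism $H^{2i}(X(SU(2),\Gamma);\Q)\cong\IZ^i(\Gamma;\Q)$, which is $\Aut(\Gamma)$-equivariant because it is canonical (Remark~\ref{automorphisms}). Thus Conjecture~\ref{MPY-conj} is equivalent to the statement that $\IZ(\Gamma;\Q)$ and $\bOT(\Gamma)$ are isomorphic as graded $\Q$-algebras by a map natural enough to be $\Aut(\Gamma)$-equivariant. The plan is to deduce this from a Gale-duality statement valid for an arbitrary totally unimodular vector arrangement $\chi\colon A\to\Lambda$ with $\chi(a)\neq 0$: there should be a natural isomorphism of graded algebras $\bOT(\chi;\Q)\cong\IZ(\chi^!;\Q)$. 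Taking $\chi=\chi_\Gamma$ (so $\chi^!=\chi_\Gamma^!$) gives the graphical case, and the $\Aut(\Gamma)$-equivariance then follows from naturality together with the fact --- already indicated before Conjecture~\ref{MPY-conj} --- that reversing the orientation of an edge $a$, which replaces $\chi_\Gamma(a)$ by $-\chi_\Gamma(a)$ and $y_a$ by $-y_a$, acts on both sides by an automorphism.

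\textbf{Comparing presentations.} Use the Orlik--Terao presentation $\OT(\chi;\Q)=\Q[y_a:a\in A]/I$ with $I$ generated by the circuit relations $\partial_C:=\sum_{a\in C}\lambda^C_a\prod_{b\in C\setminus a}y_b$, one for each circuit $C$ with dependency $\sum_a\lambda^C_a\chi(a)=0$. Passing to $\bOT(\chi;\Q)$ kills the linear forms $\ell_\a:=\sum_a\langle\a,\chi(a)\rangle y_a$ for $\a\in\Lambda^*_\Q$; since the $\chi(a)$ span $\Lambda$ these span exactly the image of the injection $\Lambda^*_\Q\hookrightarrow\Q^A$, $\a\mapsto(\langle\a,\chi(a)\rangle)_a$, so killing them turns $\Q[y_a]$ into $\Sym(\Q^A/\Lambda^*_\Q)$. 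Dualizing $0\to K\to\Q^A\to\Lambda_\Q\to 0$ with $K:=\Ker(\Q^A\to\Lambda_\Q)$ identifies $\Q^A/\Lambda^*_\Q$ with $K^*$, the inner product on $\Q^A$ afforded by the \emph{basis} $A$ identifies $K^*\cong K$, and $K\cong(\Lambda')^*_\Q$, so that $\IZ(\chi^!;\Q)$ is a graded quotient of $\Sym(K)$; moreover $\#\{a:\langle\b,\chi^!(a)\rangle\neq 0\}=|\supp\b|$ for $\b\in K$. After these canonical identifications the desired isomorphism becomes the equality of ideals in $\Sym(K)$
\[
\overline I \;=\; \langle\, \b^{|\supp\b|-1}\mid 0\neq\b\in K\,\rangle ,
\]
where $\overline I$ is the image of the circuit ideal.

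\textbf{Strategy for the ideal equality, and the main obstacle.} I would first establish the inclusion $\langle\b^{|\supp\b|-1}\rangle\subseteq\overline I$, which yields a graded surjection $\IZ(\chi^!;\Q)\twoheadrightarrow\bOT(\chi;\Q)$, and then upgrade it to an isomorphism by a dimension count: $\dim_\Q\IZ(\chi^!;\Q)=|\cZ_-(\chi^!)|$ by Proposition~\ref{Q-version}, and this equals $\dim_\Q\bOT(\chi;\Q)$ (for graphical $\chi$, by comparing \cite{MPY} with Theorem~\ref{cohomology}; in general, by the known Hilbert series of the reduced Orlik--Terao algebra). The required inclusion is equivalent to saying that every class $\b^{|\supp\b|-1}$ vanishes in $\bOT(\chi;\Q)$; by the standard reduction \cite{ArPo,Zonotopal} that the internal power ideal is generated by the elements $\b_C^{|C|-1}$ with $\supp\b_C=C$ a circuit, it suffices to treat those, and one would hope to argue by restricting the Orlik--Terao algebra to the subarrangement supported on $C$, where everything reduces to the rank-one situation of Example~\ref{ex:cycle} and its Gale dual. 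This is precisely where the difficulty lies: the image of $\partial_C$ in $\Sym(K)$ is in general \emph{not} a scalar multiple of $\b_C^{|C|-1}$ --- for a triangle $C$ in $K_4$, for instance, the image of $\partial_C$ involves no square monomials whereas $\b_C^{|C|-1}$ does --- so no single circuit relation witnesses the vanishing, and one must combine the circuit relations together with the linear forms $\ell_\a$ in a way that, degree by degree, is controlled by the $\Aut(\Gamma)$-module structure of $\Sym(K)$. Making this argument uniform over all graphs, rather than verifying it case by case, is the principal obstacle; the two routes I would try are a representation-theoretic one (pinning down each graded piece of the two ideals) and a direct identification of the Macaulay inverse systems of $\bOT(\chi;\Q)$ and of $\IZ(\chi^!;\Q)$ (the latter being the classical ``$\mathcal P$-space'' attached to a zonotopal algebra).
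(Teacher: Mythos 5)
Your proposal founders on a misidentification of the target, and the ``main obstacle'' you notice at the end is in fact fatal rather than technical. You claim Conjecture~\ref{MPY-conj} is \emph{equivalent} to an isomorphism of graded $\Q$-algebras between $\IZ(\Gamma;\Q)$ and $\bOT(\Gamma)$, but the conjecture only asserts an isomorphism of graded $\Aut(\Gamma)$-representations, and the paper explicitly warns (in the sentence immediately following the conjecture) that the two sides are \emph{typically not isomorphic as rings}. So the algebra isomorphism you aim for is generally false, and your observation that the circuit relation $\partial_C$ does not map to a multiple of $\beta_C^{|C|-1}$ in $\Sym(K)$ (e.g.\ for a triangle in $K_4$) is not a difficulty to be overcome by cleverer ideal manipulation --- it is a symptom that the ring structures genuinely differ.

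The paper's intended route, sketched just after the conjecture and deferred to a companion paper, is different: one identifies $\bOT(\chi;\Q)$ not with $\IZ(\chi^!;\Q)$ but with its \emph{linear dual} $\IZ(\chi^!;\Q)^*$, a statement that is compatible with the two rings being non-isomorphic. The conjecture then follows because a finite-dimensional $\Q$-representation of a finite group is self-dual, so $\IZ(\chi_\Gamma^!;\Q)^*\cong\IZ(\chi_\Gamma^!;\Q)$ as (graded) $\Aut(\Gamma)$-representations, and Theorem~\ref{cohomology} converts this into a statement about $H^*(X(SU(2),\Gamma);\Q)$. In other words, the passage through duality is not optional bookkeeping; it is what makes the comparison possible at all. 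Your Macaulay inverse-system idea is actually the right instinct here --- the $\mathcal{P}$-space/inverse system of $\IZ(\chi^!)$ is precisely where the dual lives --- but it should be pursued as a duality between $\bOT(\chi)$ and $\IZ(\chi^!)^*$, not an isomorphism with $\IZ(\chi^!)$ itself. Note also that the paper does not supply a proof of the conjecture; it is left to the companion paper, so no complete argument can be extracted from the present text.
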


We note that the representations in 
Conjecture~\ref{MPY-conj} are typically not isomorphic as rings, only as graded representations.
In the case where $\Gamma = K_n$ and $\Aut(\Gamma) = S_n$, this conjecture was proved by Pagaria \cite{Pagaria}.
He did not produce a canonical isomorphism, but rather proved the result by studying the generating functions
for the symmetric functions obtained by applying the Frobenius characteristic map to the graded pieces of the representations.
In \cite{ZA2}, we prove that, for any Gale dual pair of vector arrangements $\chi:A\to \Lambda$ and $\chi^!:A\to\Lambda^!$, 
the reduced Orlik--Terao algebra $\bOT(\chi)$
may be canonically identified with the linear dual of $\IZ(\chi^!;\Q)$.  Since finite dimensional representations of finite groups over $\Q$ are self-dual,
this result, combined with Theorem \ref{cohomology} and applied to the pair $\chi_\Gamma$ and $\chi_\Gamma^!$, implies Conjecture \ref{MPY-conj}.

\vspace{\baselineskip}
\noindent
{\em Acknowledgments:}
The authors are grateful to Federico Ardila, Andy Berget, Christin 
Bibby, Vic Reiner, Brendon Rhoades, and Benjamin Schr\"oter for valuable conversations. We thank Ethan 
Partida for helpful discussions about orbit harmonics and the connection of 
our work to \cite{RRT}. The first author thanks the Simons 
foundation for support.  The fourth author thanks All Souls College for 
its hospitality during the preparation of this manuscript.

\section{Zonotopal Algebra}
In this section, we begin by proving Proposition \ref{Q-version}, and then proceed to give explicit descriptions of the rings $R(\chi;\Z)$
and $\IZ(\chi;\Z)$.

\subsection{Algebras over \boldmath{$\Q$}}\label{sec:Q}
For any element $\a\in\Lambda^*$, we define $$\Supp(\a) := \{a\in A\mid \langle \a, \chi(a) \rangle \neq 0\}.$$
A nonzero primitive element of minimal support is called a {\bf cocircuit}.  
The unimodularity hypothesis implies that, for any cocircuit $\a$, we
have $\langle \a, \chi(a)\rangle \in \{-1,0,1\}$ for all $a\in A$; see for example \cite[Claim 5.42]{tutte} or \cite[Theorem 21.1]{schrijver}.  
We will write $d_\pm(\a) := |\{a\in A\mid \langle \a, \chi(a)\rangle =\pm 1\}|$, so that $d(\a) = d_+(\a) + d_-(\a) = |\Supp(\a)|$.  
By \cite[Corollary 7.17]{Z}, cocircuits are in bijection with pairs of parallel facets of the zonotope $Z(\chi)$:
\begin{equation}\label{facets}
Z(\chi) = \left\{z\in \Lambda_\R\mid \text{{$-d_-(\a) \leq \langle \a, z\rangle \leq d_+(\a)$ for all cocircuits $\a$}}\right\}.
\end{equation}
Following \cite[Section 5.1]{Zonotopal},
we define the ideal $$\cI_-(\chi;\Q) := \left\langle \a^{d(\a)-1} \bigmid \text{$\a$ a cocircuit}\right\rangle \subset R(\Lambda;\Q).$$
This is {\em a priori} only contained in the ideal $\left\langle \a^{d(\a)-1} \mid 0\neq \a\in\Lambda^*\right\rangle$ appearing in
Equation \eqref{izdef}, but in fact that containment is an equality \cite[Lemma 1]{ArPo-correction}.
Thus we have $$\IZ(\chi;\Q) = R(\Lambda;\Q)/\cI_-(\chi;\Q).$$

For any (inhomogeneous) polynomial $f\in R(\Lambda;\Q)$, let $\init(f)$ denote the sum of its top degree terms. 
For any ideal $\cJ \subset R(\Lambda;\Q)$, let $\init\cJ := \{\init(f)\mid f\in \cJ\}$.
If $\cJ$ is not homogeneous, then $R(\Lambda;\Q)/\cJ$ is filtered rather than graded, with 
the $i^\text{th}$ filtered piece consisting of the image of $R_i(\Lambda;\Q)$, and there is a canonical isomorphism
$$\gr\(R(\Lambda;\Q)/\cJ\) \cong R(\Lambda;\Q)/\init \cJ.$$

For any cocircuit $\a$, Equation \eqref{facets} tells us that $\a$
takes the values $[-d_-(\a), d_+(\a)]$ on $Z(A,\chi)$, and therefore the values $(-d_-(\a), d_+(\a)) \cap \Z$ on $\cZ_-(\chi)$.  It follows that 
the binomial coefficient $\binom{\a+d_-(\a)-1}{d(\a)-1}$ vanishes on $\cZ_-(\chi)$.
Consider the ideal 
$$\cK_-(\chi;\Q) := \left\langle \binom{\a+d_-(\a)-1}{d(\a)-1}\bigmid \text{$\a$ a cocircuit}\right\rangle\subset R(\Lambda;\Q),$$
and let
$$\tau(\Q):R(\Lambda;\Q)/\cK_-(\chi;\Q)\to R(\chi;\Q)$$ denote the restriction homomorphism.  
This is a surjective homomorphism of filtered algebras, and the $i^\text{th}$ filtered piece of the target is equal to the image of
the $i^\text{th}$ filtered piece of the source.  It follows that
the induced homomorphism $\gr\tau(\Q)$ of associated graded algebras is also surjective.

\begin{proof}[Proof of Proposition \ref{Q-version}]
The generators of $\cI_-(\chi;\Q)$ are precisely the leading terms of the generators of $\cK_-(\chi;\Q)$, thus $\cI_-(\chi;\Q)\subset\init\cK_-(\chi;\Q).$
This means that we have surjections
$$\IZ(\chi;\Q) = R(\Lambda;\Q)/\cI_-(\chi;\Q) \twoheadrightarrow R(\Lambda;\Q)/\init \cK_-(\chi;\Q) \cong 
\gr\(R(\Lambda;\Q)/\cK_-(\chi;\Q)\)\twoheadrightarrow \gr R(\chi;\Q).$$
By \cite[Proposition 1.1(3)]{Zonotopal}, the total dimension of $\IZ(\chi;\Q)$ is equal to the cardinality of $\cZ_-(\chi)$.  This is also equal to the dimension
of $R(\chi;\Q)$, and therefore of $\gr R(\chi;\Q)$.  Thus both of the surjections above must be isomorphisms.
\end{proof}

\begin{remark}
We have just proved that $\gr\tau(\Q)$ is an isomorphism,
which implies that $\tau(\Q)$ is an isomorphism, as well.
That is, the kernel of the restriction homomorphism from $R(\Lambda;\Q)$ to $R(\chi;\Q)$ is equal to $\cK_-(\chi;\Q)$.
\end{remark}

\subsection{Integer valued functions and divided powers}\label{sec:dp}
Consider the ring $R(\Lambda;\Z)\subset R(\Lambda;\Q)$ of integer valued polynomial functions on $\Lambda$, and let
$$R_i(\Lambda;\Z) := R_i(\Lambda;\Q) \cap R(\Lambda;\Z).$$
This defines a filtration, and we define
$$S(\Lambda;\Z) := \gr R(\Lambda;\Z) = \bigoplus_{i=0}^\infty R_i(\Lambda,\Z)/ R_{i-1}(\Lambda,\Z).$$
Concretely, $R_i(\Lambda,\Z)$ is spanned by products of binomial coefficients of the form
$$\binom{\a_1}{i_1}\cdots \binom{\a_k}{i_k}$$
for $\a_1,\ldots,\a_k\in\Lambda^*$ and $i_1,\ldots,i_k\in\mathbb{N}$ with $i_1+\cdots+i_k\leq i$ \cite[Theorem 5.28]{Yau}.
Passing to the associated graded, we may identify 
$S(\Lambda;\Z)$ with the free divided powers algebra on $\Lambda^*$, that is, the subgroup of $R(\Lambda;\Q)$ spanned by elements
of the form
$$\frac{\a_1^{i_1}}{i_1!}\cdots \frac{\a_k^{i_k}}{i_k!}.$$  Note that $R(\Lambda;\Z)$ and $S(\Lambda;\Z)$ are both subrings of $R(\Lambda;\Q)$, and we have
$$R(\Lambda;\Z)\otimes\Q = R(\Lambda;\Q) = S(\Lambda;\Z)\otimes\Q.$$

\excise{
If the rank of $\Lambda$ is positive, the rings $R_\Lambda$ and $S_\Lambda$ do not 
have any finitely generated proper ideals whose quotient is finitely generated as an abelian group.
For this reason, when defining ideals in these algebras, we will often saturate them as abelian groups.
That is, we define $$\langle f_1,\ldots, f_r\rangle^{\sat} := \{f\mid \text{there exists $n>0$ such that $nf\in \langle f_1,\ldots, f_r$}\rangle\}.$$
}


For any polynomial $f\in R(\Lambda;\Z)$, we have $\init(f)\in S(\Lambda;\Z)$.
As in the previous section, for any ideal $\cJ\subset R(\Lambda;\Z)$, we define
$$\init\cJ := \{\init(f)\mid f\in \cJ\}\subset S(\Lambda;\Z),$$
and we identify $\gr\(R(\Lambda;\Z)/\cJ\)$ with $S(\Lambda;\Z)/\init\cJ$.

\excise{
\begin{remark}\label{warning}
It is not true in general that $\init(\cJ)^{\sat} = \init\(\cJ^{\sat}\)$.  For example, if $\Lambda = \Z$
and $\cJ = \langle 2x+1\rangle \subset R(\Lambda;\Z)$, then the saturation of $\init(\cJ)$ contains $x$,
but the initial ideal of the saturation of $\cJ$ does not.
\end{remark}
}

\subsection{Two filtrations}\label{sec:filtrations}
There are two natural ways to define a filtration of $R(\chi;\Z)$.  The first, which appeared in the introduction,
is to define $$R_i(\chi;\Z) := R_i(\chi;\Q) \cap R(\chi;\Z).$$
This definition has the desirable property that the ring
$$\IZ(\chi;\Z) := \bigoplus_{i\geq 0} R_i(\chi;\Z)/R_{i-1}(\chi;\Z)$$ is clearly torsion-free as an abelian group.
The second is to define $\tilde R_i(\chi;\Z)$ to be the image of $R_i(\Lambda;\Z)$ in $R(\chi;\Z)$ under the restriction map
from functions on $\Lambda$ to functions on $\cZ_-(\chi)$.  This definition has the different desirable property that the ring
$$\bigoplus_{i\geq 0} \tilde R_i(\chi;\Z)/\tilde R_{i-1}(\chi;\Z)$$ is generated in degree 1 as a divided powers ring
(see the proof of Proposition \ref{Z-version} at the end of this section).
It is immediate that $\tilde R_i(\chi;\Z)\subset R_i(\chi;\Z)$
and that $R_i(\chi;\Z)$ is the saturation of $\tilde R_i(\chi;\Z)$.
However, there could {\em a priori} be an integer valued function on $\cZ_-(\chi)$ that extends to a polynomial of degree $i$ on $\Lambda$,
but not to a polynomial of degree $i$ that takes integer values on $\Lambda$.  We will prove that, in fact, this cannot happen.

\begin{proposition}\label{saturation}
For all $i$, we have $R_i(\chi;\Z) = \tilde R_i(\chi;\Z)$.
\end{proposition}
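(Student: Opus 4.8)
The plan is to show that the restriction map $R_i(\Lambda;\Z)\to R_i(\chi;\Z)$ is already surjective, i.e. that every integer-valued function on $\cZ_-(\chi)$ which extends to a \emph{rational} polynomial of degree $\le i$ on $\Lambda$ in fact extends to an \emph{integer-valued} polynomial of degree $\le i$. Since we already know $\tilde R_i(\chi;\Z)\subseteq R_i(\chi;\Z)$ and that $R_i(\chi;\Z)$ is the saturation of $\tilde R_i(\chi;\Z)$, it suffices to produce, for each $\eta \in R_i(\chi;\Z)$, a representative in $R_i(\Lambda;\Z)$. The key structural input is the computation from Section \ref{sec:Q}: the kernel of the rational restriction map $R(\Lambda;\Q)\to R(\chi;\Q)$ is exactly the ideal $\cK_-(\chi;\Q)$ generated by the binomial coefficients $\binom{\a+d_-(\a)-1}{d(\a)-1}$ for cocircuits $\a$. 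Note that these generators all lie in the subring $R(\Lambda;\Z)$ of integer-valued polynomials.

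First I would lift $\eta$ to some $f\in R_i(\Lambda;\Q)$ restricting to $\eta$. Then $f$ also takes integer values on $\cZ_-(\chi)$. The idea is to correct $f$ by an element of $\cK_-(\chi;\Q)$ to make it integer-valued \emph{everywhere} on $\Lambda$ without raising its degree. Concretely: if $f$ is not already in $R(\Lambda;\Z)$, one wants to exhibit a degree-$\le i$ polynomial $g$ that restricts to $\eta$ on $\cZ_-(\chi)$ and lies in $R_i(\Lambda;\Z)$, using that $f - g$ must lie in $\cK_-(\chi;\Q)$. The cleanest route is to work with the Rees-algebra / scheme-theoretic picture: the lattice points $\cZ_-(\chi)\subset\Lambda$ form a zero-dimensional closed subscheme, and the claim is that its \emph{integral} structure sheaf is computed correctly by truncating the integral polynomial filtration. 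I would try to reduce to the one-variable situation along each cocircuit: since $\a$ takes the values $(-d_-(\a),d_+(\a))\cap\Z$ on $\cZ_-(\chi)$, and these are $d(\a)-1$ consecutive integers, a function of $\a$ alone that is integer-valued on these points extends to an integer-valued polynomial of the same degree in $\a$ (the binomial coefficients $\binom{\a+j}{m}$ span integer-valued polynomials). The technical heart is a Chinese-remainder / interpolation argument: the points of $\cZ_-(\chi)$ are cut out by these cocircuit conditions, and one wants a compatible integral interpolation.

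A second, possibly cleaner, approach: prove the statement by a dimension/index count. We have an inclusion of filtered abelian groups $\tilde R_i(\chi;\Z)\subseteq R_i(\chi;\Z)$, both finitely generated, both with the same $\Q$-span (namely $R_i(\chi;\Q)$), so the quotient $R_i(\chi;\Z)/\tilde R_i(\chi;\Z)$ is finite. To show it vanishes, it suffices to show the associated graded rings have the same underlying abelian group. By Proposition \ref{Z-version}'s proof, $\gr(\tilde R(\chi;\Z))$ is the image of $S(\Lambda;\Z) = \gr R(\Lambda;\Z)$, i.e. $S(\Lambda;\Z)/\init\tilde\cK$ where $\tilde\cK = \ker(R(\Lambda;\Z)\to R(\chi;\Z))$; and $\gr R(\chi;\Z)$ is, by definition, $\IZ(\chi;\Z)$, which is torsion-free with $\IZ(\chi;\Z)\otimes\Q\cong\IZ(\chi;\Q)$. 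So I would argue that the surjection $S(\Lambda;\Z)/\init\tilde\cK \twoheadrightarrow \IZ(\chi;\Z)$ becomes an isomorphism after $\otimes\Q$ (by Proposition \ref{Q-version}), hence is an isomorphism of torsion-free groups of the same finite rank in each degree --- \emph{provided} one knows the source is also torsion-free, or at least has no more elements in each graded piece. That last point is exactly where the real content sits.

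\textbf{Main obstacle.} The hard part will be controlling integrality: passing from "$f$ restricts to an integer-valued function on the finite set $\cZ_-(\chi)$" to "$f$ can be chosen integer-valued on all of $\Lambda$ without increasing degree." Equivalently, ruling out torsion in $S(\Lambda;\Z)/\init\tilde\cK$, or showing $\init(\tilde\cK)$ is saturated in $S(\Lambda;\Z)$ --- which, as the excised Remark \ref{warning} warns, is false for general ideals. The special feature that should save us is total unimodularity: the cocircuit generators have coefficients in $\{-1,0,1\}$, the relevant binomial coefficients are products over consecutive-integer ranges, and the zonotope description \eqref{facets} is exact. I would expect to need a careful induction, peeling off one cocircuit at a time and using unimodularity to ensure the interpolation stays integral, which is the step I anticipate being the most delicate.
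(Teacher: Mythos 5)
Your proposal correctly isolates the difficulty---passing from ``integer-valued on $\cZ_-(\chi)$ and extends to a rational polynomial of degree $\le i$'' to ``extends to an integer-valued polynomial of degree $\le i$''---and you are right that saturation of initial ideals is the danger, as the paper's own remark about $\langle 2x+1\rangle$ illustrates. But neither of your two sketches actually closes that gap; both end at the point where you say the real content sits. Your first approach (correct $f$ by an element of $\cK_-(\chi;\Q)$, then interpolate cocircuit-by-cocircuit) is never made concrete: the cocircuits are not independent, so ``peeling them off one at a time'' requires a compatibility argument you do not supply, and it is not clear that a correction term of bounded degree exists at all. Your second approach reduces to showing $S(\Lambda;\Z)/\init\tilde\cK$ is torsion-free, which you explicitly flag as unresolved.

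The paper instead runs an induction on $|A|$ via deletion and contraction of a single non-loop, non-coloop element $a$. The engine is the short exact sequence $0\to R_i(\chi'';\Z)\xrightarrow{\xi} R_i(\chi;\Z)\xrightarrow{\partial} R_{i-1}(\chi';\Z)\to 0$ (Lemma~\ref{Riq}), whose exactness in the middle over $\Q$ comes from a Tutte-polynomial dimension count, and whose exactness over $\Z$ comes from a one-line integrality trick: Lemma~\ref{bijection} gives that $z\mapsto\bar z$ maps $\cZ_-(\chi)$ \emph{onto} $\cZ_-(\chi'')$, so a rational $f$ with $\xi f = g$ and $g$ integer-valued must itself be integer-valued. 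That surjectivity observation is the missing idea in your proposal: it converts a $\Q$-statement into a $\Z$-statement without ever touching the (badly behaved) saturation of initial ideals. Once Lemma~\ref{Riq} is in hand, the proposition follows from the Five Lemma applied to the commuting ladder between the $\tilde R_\bullet$-complex and the $R_\bullet$-complex, using the inductive hypothesis on $\chi'$ and $\chi''$ for the outer vertical equalities. Your proposal has no analogue of the deletion--contraction ladder or of the surjectivity trick, so as written it is an accurate diagnosis rather than a proof.
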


\begin{remark}
Proposition \ref{saturation} is a special property of $\cZ_-(\chi)$
rather than a statement about all finite subsets of $\Lambda$.  
For example, the function $x/2$ on the set $\{0,2\}\subset\Z$ extends to a polynomial of degree 1 on $\Z$,
but not to a polynomial of degree 1 that takes integer values on $\Z$.
\end{remark}

We will prove Proposition \ref{saturation} by induction using the operators of deletion and contraction, which we now define.
Fix an element $a\in A$, and let $\bar A:= A\setminus\{a\}$.  The element $a$ is called a {\bf loop} if $\chi(a) = 0$, and it is called a {\bf coloop} if $\{\chi(s)\mid s\in \bar A\}$ does not span $\Lambda$.
If $a$ is not a coloop, we define the {\bf deletion} $$\chi':= \chi|_{\bar A}:\bar A\to\Lambda.$$
If $a$ is not a loop, we let $\bar\Lambda := \Lambda/\Z\chi(a)$, we denote the projection from $\Lambda$ to $\bar\Lambda$ by $z\mapsto\bar z$, and we define the {\bf contraction} $$\chi'' := \bar\chi|_{\bar A}:\bar A\to\bar\Lambda.$$

\begin{lemma}\label{bijection}{\em \cite[Lemma 17]{Lenz}}
Suppose that $a$ is neither a loop nor a coloop.  Then we have an inclusion $\cZ_-(\chi')\subset\cZ_-(\chi)$,
and the map $z\mapsto \bar z$ restricts to a bijection
$$\cZ_-(\chi)\setminus\cZ_-(\chi')\to\cZ_-(\chi'').$$
\end{lemma}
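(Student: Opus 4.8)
The plan is to prove Lemma \ref{bijection} by a direct analysis of the defining inequalities for the zonotopes $Z(\chi)$, $Z(\chi')$, and $Z(\chi'')$ coming from Equation \eqref{facets}, together with a careful bookkeeping of how cocircuits of a totally unimodular arrangement behave under deletion and contraction. Since $a$ is neither a loop nor a coloop, both $\chi'$ and $\chi''$ are again totally unimodular arrangements (deletion preserves unimodularity trivially; contraction does because quotienting by a saturated rank-one sublattice preserves saturation of the relevant sublattices — this is where the coloop/loop hypotheses are used to keep everything honest). First I would recall the standard matroid-theoretic dictionary: cocircuits of $\chi'$ are the cocircuits $\a$ of $\chi$ with $\langle\a,\chi(a)\rangle = 0$ (viewed on $\bar A$), while cocircuits of $\chi''$ correspond to cocircuits $\a$ of $\chi$ with $\langle\a,\chi(a)\rangle\neq 0$, where $\a$ descends to $\bar\Lambda^*$ precisely because $\langle\a,\chi(a)\rangle=\pm1$ forces... actually one must be slightly more careful: a cocircuit of $\chi''$ lifts to a functional on $\Lambda$ vanishing on $\chi(a)$, i.e. it is a cocircuit of $\chi'$, not of $\chi$ directly. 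I would sort this out cleanly before proceeding, as the two inclusions in the statement come from two genuinely different mechanisms.

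For the inclusion $\cZ_-(\chi')\subset\cZ_-(\chi)$: a lattice point $z$ lies in the interior of $Z(\chi')$ iff $-d_-'(\a)<\langle\a,z\rangle<d_+'(\a)$ for every cocircuit $\a$ of $\chi'$. Every cocircuit $\a$ of $\chi'$ is a cocircuit of $\chi$ with $\langle\a,\chi(a)\rangle=0$, so $d_\pm'(\a)=d_\pm(\a)$, giving the strict inequalities for that subfamily. For a cocircuit $\a$ of $\chi$ with $\langle\a,\chi(a)\rangle\neq 0$: such an $\a$ need not restrict to a cocircuit of $\chi'$, but one can argue that $d_\pm(\a) \geq d_\pm(\b)$ (after matching supports appropriately) for some cocircuit $\b$ of $\chi'$, or more simply observe that since $\langle\a,\chi(a)\rangle=\pm1$ the interval $(-d_-(\a),d_+(\a))\cap\Z$ for $\chi$ strictly contains the corresponding interval for $\chi'$ on the nose — so any $z$ satisfying the $\chi'$-inequalities automatically satisfies the $\chi$-inequalities. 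I would make this last comparison precise: deleting a non-coloop $a$ decreases $d_+$ or $d_-$ (by exactly $1$ on the relevant side) for cocircuits in whose support $a$ lies, and leaves the others unchanged.

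For the bijection $\cZ_-(\chi)\setminus\cZ_-(\chi')\to\cZ_-(\chi'')$ via $z\mapsto\bar z$: the key point is that $z\in\cZ_-(\chi)$ fails to lie in $\cZ_-(\chi')$ iff some cocircuit $\a$ of $\chi'$ (equivalently, with $\langle\a,\chi(a)\rangle=0$) has $\langle\a,z\rangle$ hitting the boundary value $d_+'(\a)$ or $-d_-'(\a)$; but then, rewriting in terms of $\chi$, the same $\a$ together with the cocircuit of $\chi$ supported partly on $a$ pins $\langle\a,z\rangle$ to be an extreme value of $Z(\chi')$ while still interior to $Z(\chi)$, which forces $z$ to land on a facet of $Z(\chi')$ that ``comes from'' the contraction direction. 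I expect the cleanest route is: show $\bar z\in\cZ_-(\chi'')$ iff $z$ satisfies all $\chi$-interior inequalities AND lies on the boundary in the $\chi(a)$-direction, by matching the cocircuits of $\chi''$ to the cocircuits of $\chi$ supported on $a$ (each such cocircuit $\a$ of $\chi$, after descending to $\bar\Lambda^*$, becomes a cocircuit of $\chi''$ with $d_\pm''(\bar\a) = d_\pm(\a) - \tfrac{1}{2}(1\mp\langle\a,\chi(a)\rangle)$, i.e. we drop the index $a$ from exactly one of the two sides). Injectivity of $z\mapsto\bar z$ on the complement follows because two points with the same image differ by an integer multiple of $\chi(a)$, and the $\chi$-inequalities for the cocircuits supported on $a$ — which already force $z$ onto one particular coset representative — leave no room; surjectivity follows by lifting a point of $\cZ_-(\chi'')$ along the unique extreme value in the $\chi(a)$-direction permitted by those same inequalities.

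**Main obstacle.** The hard part will be the honest verification of surjectivity: given $\bar w\in\cZ_-(\chi'')$, one must produce $z\in\Lambda$ with $\bar z = \bar w$ lying in $Z(\chi)$ (interior on all non-$a$-direction facets) but on the boundary in the $\chi(a)$-direction, and check it is not in $Z(\chi')$. This requires showing the system of strict inequalities from the non-$a$-supported cocircuits, intersected with the slab between two consecutive parallel facets determined by the $a$-supported cocircuits, contains a lattice point on the correct facet — essentially an integrality/compatibility statement that leans on total unimodularity (so that facet-defining inequalities have integer right-hand sides and the relevant polytope has integer vertices). I would handle this by either invoking the explicit combinatorial description of $\cZ_-$ in the literature (\cite{Lenz}, whose Lemma 17 this is) or, to keep the paper self-contained, by an induction that peels off the constraints one cocircuit at a time; in the write-up I would likely just cite \cite[Lemma 17]{Lenz} for this lemma, as the excerpt already attributes it there, and spend the words instead on the consequences needed for Proposition \ref{saturation}.
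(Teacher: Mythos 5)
The paper itself does not prove this lemma: it cites \cite[Lemma 17]{Lenz} and moves on, and you ultimately recommend the same, so at the level of ``what belongs in the paper'' your approach agrees with the authors'. The sketch you give before that recommendation is the right kind of argument (facet inequalities via \eqref{facets} plus matroid-theoretic bookkeeping of cocircuits under deletion and contraction), but it has one internal inconsistency worth flagging and one place where you work harder than necessary.

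The overcomplication: the inclusion $\cZ_-(\chi')\subset\cZ_-(\chi)$ needs no cocircuit analysis at all. We have $Z(\chi) = Z(\chi') + [0,1]\cdot\chi(a)$, so $Z(\chi')\subset Z(\chi)$, and since $a$ is not a coloop the smaller zonotope $Z(\chi')$ is already full-dimensional in $\Lambda_\R$. Any open subset of a closed convex body is contained in its interior, so $\operatorname{int}Z(\chi')\subset\operatorname{int}Z(\chi)$ and hence $\cZ_-(\chi')\subset\cZ_-(\chi)$.

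The inconsistency: in your opening paragraph you correctly observe that a cocircuit of $\chi''$ is a functional on $\bar\Lambda = \Lambda/\Z\chi(a)$, i.e. its lift to $\Lambda^*$ vanishes on $\chi(a)$; matroid-theoretically, cocircuits of $\chi''$ are exactly the cocircuits $\a$ of $\chi$ with $a\notin\Supp(\a)$, and for such $\a$ one has $d_\pm''(\a)=d_\pm(\a)$ with no adjustment. But later, when you attempt the bijection, you write that ``each such cocircuit $\a$ of $\chi$ [with $\langle\a,\chi(a)\rangle\neq 0$], after descending to $\bar\Lambda^*$, becomes a cocircuit of $\chi''$ with $d_\pm''(\bar\a) = d_\pm(\a) - \tfrac12(1\mp\langle\a,\chi(a)\rangle)$.'' This cannot be: a functional with $\langle\a,\chi(a)\rangle\neq 0$ does not factor through $\bar\Lambda$, so $\bar\a$ is undefined, and no such adjusted-count formula is in play. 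The cocircuits of $\chi$ supported on $a$ are exactly the ones that \emph{do not} survive to $\chi''$; their role in the argument is instead to cut the fiber of $\Lambda_\R\to\bar\Lambda_\R$ over $\bar z$ into an interval, and one then has to count lattice points in that interval lying in $\operatorname{int}Z(\chi)$ versus $\operatorname{int}Z(\chi')$. Since you in any case conclude by citing \cite[Lemma 17]{Lenz} (as the paper does), this slip would not survive into a final write-up, but it is a genuine error in the sketch as stated.
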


Consider the short exact sequence
\begin{equation}\label{R-exact}0\longrightarrow R(\chi'';\Z)\overset{\xi}{\longrightarrow} R(\chi;\Z)\overset{\partial}{\longrightarrow}R(\chi';\Z)\longrightarrow 0,\end{equation}
where $$\xi f(z) := f(\bar z)\and \partial g(z) := g(z + \chi(a)) - g(z).$$
It is clear that $\im\xi=\ker\partial$, and the injectivity of $\xi$ and surjectivity of $\partial$ both follow from Lemma \ref{bijection}.
Restricting the degrees of the functions, we obtain a complex
\begin{equation}\label{Ri-exact}0\longrightarrow R_i(\chi'';\Z)\overset{\xi}{\longrightarrow} R_i(\chi;\Z)\overset{\partial}{\longrightarrow}R_{i-1}(\chi';\Z)\longrightarrow 0.\end{equation}
We still know that $\xi$ is injective and $\partial$ surjective, but it is not {\em a priori} clear that \eqref{Ri-exact} is exact in the middle.
Indeed, if $g\in R_i(\chi;\Z)$ and $\partial g = 0$, we know that there exists $f\in R(\chi'';\Z)$ with $\xi f = \partial g$, but 
it is not obvious that $f\in R_i(\chi'';\Z)$.\footnote{If the projection from $\Lambda$ to $\bar\Lambda$ had a linear section
taking $\cZ_-(\chi'')$ to a subset of $\cZ_-(\chi)$, then $f$ would be the pullback of $g$ along this section,
and we could conclude that $f\in R_i(\chi'';\Z)$.  However, such a section does not always exist.  
In particular, the inverse of the bijection in Lemma \ref{bijection} is typically not linear.}
We can further restrict to functions that extend to integer valued functions on the lattice of bounded degree, and we obtain a complex
\begin{equation}\label{Ritilde-exact}0\longrightarrow \tilde R_i(\chi'';\Z)\overset{\xi}{\longrightarrow} \tilde R_i(\chi;\Z)\overset{\partial}{\longrightarrow}\tilde R_{i-1}(\chi';\Z)\longrightarrow 0,\end{equation}
again with $\xi$ injective and $\partial$ surjective.

\begin{lemma}\label{Riq}
The complex \eqref{Ri-exact} is exact.
\end{lemma}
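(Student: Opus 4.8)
The plan is to deduce the lemma from its rational analogue, using that $\xi$ is an honest map of rings of functions to handle the passage back from $\Q$ to $\Z$. Since the paragraph preceding the statement already supplies that $\xi$ is injective and $\partial$ is surjective in \eqref{Ri-exact}, and $\partial\xi=0$ (inherited from $\im\xi=\ker\partial$ in \eqref{R-exact}), what remains is exactness in the middle.

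First I would establish the rational version. Tensoring \eqref{Ri-exact} with $\Q$ over $\Z$ — which is harmless, since each of $R_i(\chi'';\Z)$, $R_i(\chi;\Z)$, $R_{i-1}(\chi';\Z)$ is a full-rank lattice in the corresponding rational vector space and $\Q$ is flat — gives a complex
$$0\longrightarrow R_i(\chi'';\Q)\overset{\xi}{\longrightarrow} R_i(\chi;\Q)\overset{\partial}{\longrightarrow}R_{i-1}(\chi';\Q)\longrightarrow 0$$
with $\xi$ injective, $\partial$ surjective, and $\partial\xi=0$. For such a complex of finite-dimensional vector spaces, exactness in the middle is equivalent to the single numerical identity $\dim_\Q R_i(\chi;\Q)=\dim_\Q R_i(\chi'';\Q)+\dim_\Q R_{i-1}(\chi';\Q)$. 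By Proposition \ref{Q-version}, $\dim_\Q R_i(\chi;\Q)=\sum_{j\le i}\dim_\Q\IZ^j(\chi;\Q)$, and similarly for $\chi'$ and $\chi''$; taking first differences in $i$, the family of these identities (over all $i$) is equivalent to the Hilbert-series recursion
$$\sum_j \dim_\Q\IZ^j(\chi;\Q)\,q^j=\sum_j \dim_\Q\IZ^j(\chi'';\Q)\,q^j+q\sum_j \dim_\Q\IZ^j(\chi';\Q)\,q^j.$$
This is a standard deletion–contraction property of the internal zonotopal algebra: it follows from the expression of its Hilbert series as a specialization of the Tutte polynomial of the matroid of $\chi$, together with the deletion–contraction recursion for the Tutte polynomial (see \cite{Zonotopal,ArPo}). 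The degree shift lands on the deletion term $\chi'$ because contracting a non-loop preserves $|A|-\rk$ while deleting a non-coloop decreases it by one; one can double-check everything in Example \ref{ex:cycle}, where the three Hilbert series are truncations of $1+q+q^2+\cdots$.

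Finally I would pass back to $\Z$. Let $g\in R_i(\chi;\Z)$ with $\partial g=0$. By exactness of the ungraded sequence \eqref{R-exact}, $g=\xi f$ for a unique $f\in R(\chi'';\Z)$ — unique because the map $z\mapsto\bar z$ sends $\cZ_-(\chi)$ onto $\cZ_-(\chi'')$ (Lemma \ref{bijection}), so $\xi$ is injective as a map of function rings. Regarding $g$ instead as an element of $R_i(\chi;\Q)$ and invoking the rational exactness just proved, there is $\tilde f\in R_i(\chi'';\Q)$ with $\xi\tilde f=g$; the same injectivity forces $\tilde f=f$ as functions on $\cZ_-(\chi'')$. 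Hence $f\in R_i(\chi'';\Q)\cap R(\chi'';\Z)=R_i(\chi'';\Z)$, which is exactly exactness of \eqref{Ri-exact} in the middle.

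I expect the rational step to be the real content: equivalently, it amounts to the assertion that the ungraded short exact sequence \eqref{R-exact} is \emph{strictly} exact for the degree filtrations (with the shift on $\chi'$). As the footnote to the statement indicates, this cannot be seen by pulling $g$ back along a section of $\Lambda\to\bar\Lambda$, and the dimension count via Proposition \ref{Q-version} and the Tutte-polynomial recursion is what fills the gap; the only bookkeeping subtlety is to keep the degree shift on the correct (deletion) term throughout.
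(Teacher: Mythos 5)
Your proof is correct and takes essentially the same route as the paper's: tensor \eqref{Ri-exact} with $\Q$, establish exactness of the rationalized complex via the dimension count coming from Proposition \ref{Q-version} and the Tutte-polynomial deletion--contraction recursion (with the degree shift correctly landing on the deletion term $\chi'$, since $|A|-\rk$ drops by one under deletion but is preserved under contraction), and then pass back to $\Z$ using the surjectivity of $z\mapsto\bar z$ on internal lattice points to conclude integrality of the preimage. The only cosmetic difference is that the paper obtains $f\in R_i(\chi'';\Q)$ from the rational exactness and then observes directly that $f$ is integer-valued because $g$ is, whereas you obtain $f\in R(\chi'';\Z)$ from the ungraded sequence, separately obtain $\tilde f\in R_i(\chi'';\Q)$, and identify them by the injectivity of $\xi$; these are logically equivalent.
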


\begin{proof}
We first consider the complex
\begin{equation}\label{Riq-exact}0\longrightarrow R_i(\chi'';\Q)\overset{\xi}{\longrightarrow} R_i(\chi;\Q)\overset{\partial}{\longrightarrow}R_{i-1}(\chi';\Q)\longrightarrow 0\end{equation}
obtained by tensoring \eqref{Ri-exact} with $\Q$.
By \cite[Proposition 1.9]{Zonotopal} or \cite[Proposition 4.15]{ArPo}, we have
\begin{equation}\label{IZ-Tutte}\sum_{i\geq 0}t^i \dim\IZ^i(\chi;\Q) = t^{|A|-\rk \Lambda}T_{\chi}(0,t^{-1}),\end{equation}
where $T_\chi(x,y)$ is the Tutte polynomial of the matroid associated with $\chi$.
By Proposition \ref{Q-version}, this means that $\dim R_i(\chi;\Q)$ is equal to the evaluation at 1 of the 
truncation of $t^{|A|-\rk \Lambda}T_{\chi}(0,t^{-1})$ to degree $i$,
and similarly for $R_i(\chi'';\Q)$ and $R_{i-1}(\chi';\Q)$.  Using the well-known recursion
$$T_{\chi}(x,y) = T_{\chi'}(x,y) + T_{\chi''}(x,y),$$ we can conclude that $\dim R_i(\chi;\Q) = \dim R_i(\chi'';\Q) + \dim R_{i-1}(\chi';\Q)$.
Since we already know that $\xi$ is injective and $\partial$ is surjective, \eqref{Riq-exact} must be exact.

Now suppose that $g\in R_i(\chi;\Z)$ and $\partial g = 0$.  We have just shown that there exists $f\in R_i(\chi'';\Q)$ such that $\xi f = g$.
Since the map $z\mapsto \bar z$ from $\cZ_-(\chi)$ to $\cZ_-(\chi'')$ is surjective and $g$ takes integer values, $f$ must also take
integer values.  Thus $f\in R_i(\chi'';\Z)$.
\end{proof}

\excise{
\begin{lemma}\label{Ritilde}
If $\tilde R_i(\chi'') = R_i(\chi'')$, then the complex \eqref{Ritilde-exact} is exact.
\end{lemma}

\begin{proof}
Suppose that $g\in \tilde R_i(\chi;\Z)$ and $\partial g = 0$.  By Lemma \ref{Ri}, there exists $f\in R_i(\chi'';\Z)$ such that $\xi f = g$.
The lemma then follows from the assumption that $\tilde R_i(\chi'') = R_i(\chi'')$.
\end{proof}
}

\begin{proof}[Proof of Proposition \ref{saturation}]
If we have any coloops, then $\cZ_-(\chi)$ is empty and the proposition is trivial.
If all elements are loops, then $\Lambda = 0$, $\cZ_-(\chi) = \Lambda$, and again the proposition is trivial.
Thus we may assume that there exists an element $a\in A$ that is neither a loop nor a coloop.  We may also assume by induction 
on the cardinality of $A$ that the statement holds for both the deletion $\chi'$ and the contraction $\chi''$.

We have a commutative diagram of the following form:
\[
\begin{tikzcd}
0 \ar[rr] && \tilde R_i(\chi'';\Z) \ar[rr, "\xi"]\ar[dd, "="] && \tilde R_i(\chi;\Z) \ar[rr, "\partial"]\ar[dd, hook] && \tilde R_{i-1}(\chi';\Z) \ar[rr]\ar[dd, "="] && 0\\ \\ 
0\ar[rr] &&R_i(\chi'';\Z) \ar[rr, "\xi"] && R_i(\chi;\Z) \ar[rr, "\partial"] && R_{i-1}(\chi';\Z) \ar[rr] && 0.
\end{tikzcd}
\]
Lemma \ref{Riq} tells us that the bottom row is exact, and since we already know that $\partial$ is surjective on the top row,
we can conclude that the top row is exact, as well.
By the Five Lemma, the middle vertical map must be an isomorphism.
\end{proof}

\begin{proof}[Proof of Proposition \ref{Z-version}]
Let $e\in \IZ^i(\chi;\Z)$ be given, and lift $e$ to an element $\eta\in R_i(\chi;\Z) = \tilde R_i(\chi;\Z)$.
By definition of $\tilde R_i(\chi;\Z)$, we may extend $\eta$ to an element $\tilde\eta \in R_i(\Lambda;\Z)$.
Then $\tilde\eta$ can be written as a linear combination of functions of the form
$$\binom{\a_1}{i_1}\cdots \binom{\a_k}{i_k}$$
for $\a_1,\ldots,\a_k\in\Lambda^*$ and $i_1,\ldots,i_k\in\mathbb{N}$ with $i_1+\cdots+i_k\leq i$.
It follows that $e$ lies in the divided powers subring of $\IZ(\chi;\Z)$ generated by the images in $\IZ^1(\chi;\Z)$ of $\a_1,\ldots,\a_k$.
\end{proof}

\subsection{Presentations}\label{sec:Z}
In Section \ref{sec:Q}, we gave explicit presentations of the rings $R(\chi;\Q)$ and $\IZ(\chi;\Q)$ as quotients of $R(\Lambda;\Q)$.
In this section, we give analogous descriptions of $R(\chi;\Z)$ and $\IZ(\chi;\Z)$ as quotients of $R(\Lambda;\Z)$ and $S(\Lambda;\Z)$, respectively.

If the rank of $\Lambda$ is positive, the rings $R_\Lambda$ and $S_\Lambda$ do not 
have any finitely generated proper ideals whose quotient is finitely generated as an abelian group.
For this reason, when defining ideals in these algebras, we often saturate them as abelian groups.
That is, we define $$\langle f_1,\ldots, f_r\rangle^{\sat} := \{f\mid \text{there exists $n>0$ such that $nf\in \langle f_1,\ldots, f_r$}\rangle\}.$$
Note that it is not true in general that $\init(\cJ)^{\sat} = \init(\cJ^{\sat})$.  For example, if $\Lambda = \Z$
and we let $\cJ = \langle 2x+1\rangle \subset R(\Lambda;\Z)$, then the saturation of $\init(\cJ)$ contains $x$,
but the initial ideal of the saturation of $\cJ$ does not.

Consider the ideals
$$\cK_-(\chi;\Z) := \left\langle \binom{\a+d_-(\a)-1}{d(\a)-1}\bigmid \text{$\a$ a cocircuit}\right\rangle^{\sat}\subset\;\; R(\Lambda;\Z)$$
and 
$$\cI_-(\chi;\Z) := \left\langle \frac{\a^{d(\a)-1}}{(d(\a)-1)!}\bigmid \text{$\a$ a cocircuit}\right\rangle^{\sat} \subset\;\; S(\Lambda;\Z).$$

\begin{proposition}\label{Z-presentations}
We have canonical isomorphisms
$$R(\chi;\Z) \cong R(\Lambda;\Z)/\cK_-(\chi;\Z)\and \IZ(\chi;\Z) \cong S(\Lambda;\Z)/\cI_-(\chi;\Z).$$
\end{proposition}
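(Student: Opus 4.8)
The plan is to derive both presentations from the rational presentations of Section~\ref{sec:Q} together with Proposition~\ref{saturation}, using throughout the principle that a quotient of $R(\Lambda;\Z)$ or of $S(\Lambda;\Z)$ by a saturated ideal is torsion-free, so that a surjection between two such quotients that becomes an isomorphism after $\otimes\Q$ must already be an isomorphism. The two displayed isomorphisms are really one statement, at the filtered and at the associated-graded level: I would prove the first directly and obtain the second by applying $\gr$.

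For the first isomorphism, I would start from the restriction homomorphism $\rho\colon R(\Lambda;\Z)\to R(\chi;\Z)$. It is surjective, since by Proposition~\ref{saturation} its image contains $\tilde R_i(\chi;\Z)=R_i(\chi;\Z)$ for every $i$. The computation recorded just before the statement of the proposition (via \eqref{facets}) shows that for each cocircuit $\a$ the function $\binom{\a+d_-(\a)-1}{d(\a)-1}$ vanishes on $\cZ_-(\chi)$, so the generators of $\cK_-(\chi;\Z)$ lie in $\ker\rho$; since $R(\chi;\Z)$ is torsion-free, $\ker\rho$ is saturated, whence $\cK_-(\chi;\Z)\subseteq\ker\rho$. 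This yields a surjection $R(\Lambda;\Z)/\cK_-(\chi;\Z)\twoheadrightarrow R(\chi;\Z)$ of torsion-free abelian groups. Tensoring with $\Q$ removes the saturation and turns the generators of $\cK_-(\chi;\Z)$ into those of $\cK_-(\chi;\Q)$, so the map becomes $\tau(\Q)\colon R(\Lambda;\Q)/\cK_-(\chi;\Q)\to R(\chi;\Q)$, which is an isomorphism by the remark following the proof of Proposition~\ref{Q-version}. A surjection of torsion-free groups that is rationally an isomorphism is injective, so this finishes the first part.

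For the second isomorphism, I would use Proposition~\ref{saturation} again, now to identify the filtration $R_i(\chi;\Z)=R_i(\chi;\Q)\cap R(\chi;\Z)$ with the image under $\rho$ of the binomial filtration on $R(\Lambda;\Z)$. Consequently the associated graded of $\rho$ is surjective with kernel $\init(\ker\rho)$, and the first isomorphism gives $\IZ(\chi;\Z)=\gr R(\chi;\Z)\cong S(\Lambda;\Z)/\init\cK_-(\chi;\Z)$. Since $\init\binom{\a+d_-(\a)-1}{d(\a)-1}=\a^{d(\a)-1}/(d(\a)-1)!$, the generators of $\cI_-(\chi;\Z)$ before saturating are initial terms of generators of $\cK_-(\chi;\Z)$, hence lie in $\init\cK_-(\chi;\Z)$; and $\init\cK_-(\chi;\Z)$ is saturated because $\IZ(\chi;\Z)$ is torsion-free. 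Therefore $\cI_-(\chi;\Z)\subseteq\init\cK_-(\chi;\Z)$, producing a surjection $\phi\colon S(\Lambda;\Z)/\cI_-(\chi;\Z)\twoheadrightarrow\IZ(\chi;\Z)$ of torsion-free groups. After $\otimes\Q$ both the saturation and the scalars $(d(\a)-1)!$ disappear, so the source becomes $R(\Lambda;\Q)/\cI_-(\chi;\Q)=\IZ(\chi;\Q)$; then $\phi\otimes\Q$ is a surjective endomorphism of the finite-dimensional space $\IZ(\chi;\Q)$, hence an isomorphism, so $\phi$ is injective.

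The argument is essentially formal once Proposition~\ref{saturation} is in hand, and that is where the real content lies: it is what makes the filtration on $R(\chi;\Z)$ by functions extending to a rational polynomial of degree at most $i$ coincide with the filtration by functions extending to an integer-valued polynomial of degree at most $i$, and this coincidence is precisely what exhibits $\IZ(\chi;\Z)$ as a quotient of the free divided powers algebra $S(\Lambda;\Z)$ rather than merely of $R(\Lambda;\Q)$. The only point requiring care is the interaction of $\init$ with $\sat$: the excerpt warns that $\init(\cJ^{\sat})\neq\init(\cJ)^{\sat}$ in general, but I never need that identity---only that $\init$ of the single saturated ideal $\cK_-(\chi;\Z)$ is again saturated, which is automatic from torsion-freeness of its associated graded quotient.
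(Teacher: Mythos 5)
Your proof is correct. The first half coincides with the paper's argument: the generators of $\cK_-(\chi;\Z)$ vanish on $\cZ_-(\chi)$, the saturation vanishes too because $R(\chi;\Z)$ is torsion-free, and then the surjection $R(\Lambda;\Z)/\cK_-(\chi;\Z)\twoheadrightarrow R(\chi;\Z)$ of torsion-free groups is an isomorphism because it is rationally an isomorphism. Your treatment of the second isomorphism takes a slightly different route. The paper invokes Proposition~\ref{Z-version} (generation of $\IZ(\chi;\Z)$ in degree one as a divided powers ring) to get surjectivity of the natural map $S(\Lambda;\Z)\to\IZ(\chi;\Z)$; you instead note that Proposition~\ref{saturation} makes the restriction $\rho\colon R(\Lambda;\Z)\to R(\chi;\Z)$ a strict morphism of filtered rings (i.e.\ $R_i(\chi;\Z)=\rho(R_i(\Lambda;\Z))$), so that taking $\gr$ of the first isomorphism immediately gives $\IZ(\chi;\Z)\cong S(\Lambda;\Z)/\init\cK_-(\chi;\Z)$, and then conclude by observing $\cI_-(\chi;\Z)\subseteq\init\cK_-(\chi;\Z)$ and running the same rational comparison. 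Both routes ultimately rest on Proposition~\ref{saturation} (the paper via the proof of Proposition~\ref{Z-version}, you directly), so the content is the same; your version has the small advantage of making explicit that the second presentation is literally the associated graded of the first, which the paper leaves implicit. Your final remark about needing only that $\init\cK_-(\chi;\Z)$ is saturated (not the false identity $\init(\cJ^{\sat})=\init(\cJ)^{\sat}$) correctly anticipates the one point at which a careless argument would go wrong.
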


\begin{proof}
We have already argued that the function $\binom{\a+d_+(\a)-1}{d(\a)-1}$ vanishes on $\cZ_-(\chi)$.
Since $R(\chi;\Z)$ is torsion-free as an abelian group, this implies that the saturation of the ideal generated by these functions
vanishes on $\cZ_-(\chi)$, 
thus we have a surjective homomorphism
$$\tau(\Z):R(\Lambda;\Z)/\cK_-(\chi;\Z)\to R(\chi;\Z).$$
The source and target of $\tau(\Z)$ are both free abelian groups, and we proved in Section \ref{sec:Q} that $\tau(\Q) = \tau(\Z)\otimes\Q$
is an isomorphism, thus $\tau(\Z)$ is itself an isomorphism.

The proof of the second statement is similar.  
By Proposition \ref{Z-version}, the natural homomorphism from $S(\Lambda;\Z)$ to $\IZ(\chi;\Z)$ is surjective.
Since $\IZ(\chi;\Z)$ is torsion-free and $\a^{d(\a)-1}$ is contained in the kernel for every cocircuit $\a$, the entire saturated 
ideal $\cI_-(\chi;\Z)$ is contained in the kernel.  This provides us with a 
surjective homomorphism from
$S(\Lambda;\Z)/\cI_-(\chi;\Z)$ to $\IZ(\chi;\Z)$.
The source and target are free abelian groups, and after tensoring with $\Q$, we obtain the isomorphism $\gr\tau(\Q)$ from Section \ref{sec:Q}.
Thus the original homomorphism is an isomorphism.
\end{proof}

\begin{remark}
Note that, since we are saturating anyway, we could have defined 
$$\cI_-(\chi;\Z) := \left\langle \a^{d(\a)-1} \bigmid \text{$\a$ a cocircuit}\right\rangle^{\sat} \subset\;\; S(\Lambda;\Z).$$
However, we emphasize that the saturation step is needed even when we leave the denominators in.  Indeed,
the operation of saturation involves adding infinitely many new generators in arbitrarily high degrees;
we do not know of a more explicit description of these generators.
\end{remark}

\subsection{The cographical case}\label{sec:cographical}
We conclude this section with an explicit description of the objects appearing in Proposition \ref{Z-presentations} for cographical vector arrangements.
Let $\Gamma = (V,A,h,t)$ be a directed graph. 
For any $a\in A$, we will formally write $h(-a):= t(a)$ and $t(-a) := h(a)$.
We define an {\bf oriented cycle} to be a set $C = \{(a_1,\eps_1),\ldots,(a_k,\eps_k)\}\subset A\times\{\pm 1\}$ with the following properties:
\begin{itemize}
\item We have $h(\eps_i a_i) = t(\eps_{i+1} a_{i+1})$ for all $1\leq i\leq k-1$ and $h(\eps_k a_k) = t(\eps_1 a_1)$.
\item The vertices $h(\eps_1 a_1),\ldots,h(\eps_k a_k)$ are distinct.
\item If $k=2$, then the arrows $a_1$ and $a_2$ are distinct.
\end{itemize}
Note that we only regard $C$ as a set, not an ordered $k$-tuple; the ordering is unique up to cyclic permutation.
For any cycle $C$, we define $C^+ := \{a_i\mid \eps_i = 1\}$ and $C^- := \{a_i\mid \eps_i = -1\}$.
Let $$\a_C:= \sum_{i=1}^k \eps_i a_i\in \Lambda^* = H_1(\Gamma;\Z) \subset C_1(\Gamma;\Z).$$
Then $\a_C$ is a cocircuit of $\chi_{\Gamma}^!$ with $d_{\pm}(\a_C) = |C^{\pm}|$, and every cocircuit is of this form.
For any oriented cycle $C = \{(a_1,\eps_1),\ldots,(a_k,\eps_k)\}$, we define the {\bf opposite} cycle
$\bar C := \{(a_1,-\eps_1),\ldots,(a_k,-\eps_k)\}$.  We then have $\a_{\bar C} = - \a_C$ and $d_{\pm}(\a_{\bar C}) = d_{\mp}(\a_C)$.
We define an {\bf oriented \boldmath{$\Theta$}-subgraph} of $\Gamma$ to be a triple $\{C_1,C_2,C_3\}$ of oriented cycles
with $\a_{C_1} + \a_{C_2} + \a_{C_3} = 0$.

The ring $R(\Lambda;\Z)$ is isomorphic to the quotient of the free binomial ring with generators $\a_C$
for each oriented cycle $C$ (in the sense of \cite[Section 5.4]{Yau})
by the linear relations $\a_C + \a_{\bar C}$ for each $C$ and
$\a_{C_1} + \a_{C_2} + \a_{C_3}$ for each
oriented $\Theta$-subgraph $\{C_1,C_2,C_3\}$.
The ring $S(\Lambda;\Z)$ is isomorphic to the quotient of the free divided powers ring with the same generators by the same linear relations.
We have $$\cK_-(\chi^!_\Gamma;\Z) = \left\langle \binom{\a_C+|C^-|-1}{|C|-1}\bigmid \text{$C$ an oriented cycle}\right\rangle^{\sat}\subset\;\;  R(\Lambda;\Z)$$
and $$\cI_-(\chi^!_\Gamma;\Z) = \left\langle \frac{\a_C^{|C|-1}}{(|C|-1)!}\bigmid \text{$C$ an oriented cycle}\right\rangle^{\sat}\subset\;\;  S(\Lambda;\Z).$$
We note that $$\binom{\a_C+|C^-|-1}{|C|-1} = \pm \binom{\a_{\bar C}+|\bar C^-|-1}{|\bar C|-1}\and \frac{\a_C^{|C|-1}}{(|C|-1)!} = \pm \frac{\a_{\bar C}^{|\bar C|-1}}{(|\bar C|-1)!},$$ thus it is enough to choose one orientation of each cycle for the sake of describing these ideals.

\subsection{An example}\label{ex:house-graph}
Let $\Gamma$ be the following graph:
  \begin{center}
    \begin{tikzpicture}[scale=.75]
      \node[circle,draw,fill=black,inner sep = .15em] (a) at (0,0) {};
      \node[circle,draw,fill=black,inner sep = .15em] (b) at (0,2) {};
      \node[circle,draw,fill=black,inner sep = .15em] (c) at (2,2) {};
      \node[circle,draw,fill=black,inner sep = .15em] (d) at (2,0) {};
      \node[circle,draw,fill=black,inner sep = .15em] (e) at (4,1) {};
      \draw[-, thick] (b) to node[above]{1} (c);
      \draw[-, thick] (a) to node[left]{2} (b);
      \draw[-, thick] (a) to node[below]{3} (d);
      \draw[-, thick] (c) to node[left]{4} (d);
      \draw[-, thick] (c) to node[above]{5} (e);
      \draw[-, thick] (d) to node[below]{6} (e);
      \end{tikzpicture}
  \end{center}
  We orient the edge 1 from right to left, and choose the remainder of the orientations in such a way so that we have six oriented cycles, namely
  \begin{eqnarray*} C_3 &=& \{(4,1),(5,1),(6,1)\},\\ 
  C_4 &=& \{(1,1),(2,1),(3,1),(4,1)\},\\  
  C_5 &=& \{(1,1),(2,1),(3,1),(6,-1),(5,-1)\},
  \end{eqnarray*}
  and their opposites $\bar C_3$, $\bar C_4$, and $\bar C_5$.
The triple $\{C_3,\bar C_4,C_5\}$ is a
  $\Theta$-subgraph, meaning that 
  $$0 = \a_{C_3} + \a_{\bar C_4} + \a_{C_5} = \a_{C_3} - \a_{C_4} + \a_{C_5}.$$
 
  The lattice $\Lambda = H^2(\Gamma;\Z)$ has rank 2, and we will use the coordinates $\a_{C_4}, \a_{C_3}\in\Lambda^*$ to identify $\Lambda$ with $\Z^2$,
  with $\a_{C_4}$ corresponding to the first (horizontal) coordinate function and $\a_{C_3}$ to the second (vertical) coordinate function.
  Under this identification, we have
  $$\chi(1) = \chi(2) = \chi(3) = (1,0),\qquad \chi(4) = (1,1), 
\and   \chi(5) = \chi(6) = (0,1),$$
and the the zonotope $Z(\Gamma)$ and the set $\cZ_-(\Gamma)$ of internal lattice points are as follows:
  \begin{center}
\excise{
        \begin{tikzpicture}[scale=1.5,>=stealth]
          \draw[->] (-0.5,0) -- (1.5,0) node[right] {$f_1$};
          \draw[->] (0,-0.5) -- (0,1.5) node[above] {$f_2$};
          
          \draw[->, very thick, red] (0,0) -- (1,0) node[midway, below] {$(1,0)$};
          \draw[->, very thick, blue] (0,0) -- (1,1) node[midway, right] {$(1,1)$};
          \draw[->, very thick, green!80!black] (0,0) -- (0,1) node[midway, left] {$(0,1)$};
          
      \end{tikzpicture}
      \qquad
}
      \begin{tikzpicture}[scale=1.15]
        \fill[blue!20, opacity=0.5] (0,0) -- (0,2) -- (1,3) -- (4,3) -- (4,1) -- (3,0) -- cycle;
  
        \filldraw[black] (1,1) circle (2pt) node[below] {$(1,1)$};
        \filldraw[black] (1,2) circle (2pt) node[below] {$(1,2)$};
        \filldraw[black] (2,1) circle (2pt) node[below] {$(2,1)$};
        \filldraw[black] (2,2) circle (2pt) node[below] {$(2,2)$};
        \filldraw[black] (3,1) circle (2pt) node[below] {$(3,1)$};
        \filldraw[black] (3,2) circle (2pt) node[below] {$(3,2)$};

        \draw[thick, blue] (0,0) -- (0,2) -- (1,3) -- (4,3) -- (4,1) -- (3,0) -- cycle;

\excise{
        \filldraw[black] (0,0) circle (2pt) node[left] {$(0,0)$};
        \filldraw[black] (3,0) circle (2pt) node[below] {$(3,0)$};
        \filldraw[black] (0,2) circle (2pt) node[left] {$(0,2)$};
        \filldraw[black] (1,3) circle (2pt) node[above] {$(1,3)$};
        \filldraw[black] (4,3) circle (2pt) node[above] {$(4,3)$};
        \filldraw[black] (4,1) circle (2pt) node[right] {$(4,1)$};
}
    \end{tikzpicture}
  \end{center}
\excise{  This graph has one $\Theta$-subgraph $\{1234,456,12356\}$, so $R(\Lambda;\Z)$ is the quotient of a binomial ring with six generators $x=\a_{1234}$, $\hat{x}=\a_{\overline{1234}}$, $y=\a_{456}$, $\hat{y}=\a_{\overline{456}}$, $z = \a_{12356}$, and $\hat{z} = \a_{\overline{12356}}$ by the relations
    \[
        x + \hat{x}\,,\qquad
        y + \hat{y}\,,\qquad
        z + \hat{z}\,,\qquad
        x + \hat{y} + \hat{z}\,,
        \qquad \text{and} \qquad
        \hat{x} + y + z\,.
    \]
    Similarly, $S(\Lambda;\Z)$ is a divided powers algebra on the same generators and relations.}
The ring $R(\Lambda;\Z)$ is the free binomial ring on the generators $\a_{C_4}$ and $\a_{C_3}$, while $S(\Lambda;\Z)$ is the free divided
powers ring on the same generators.
\excise{  To see why $z = x - y$ in $R(\Lambda;\Z)$, for example, notice that 
  \begin{center}
    \begin{tikzpicture}[scale=.75]
      \draw[] (-1,2) node {$x = $};

      \fill[blue!20, opacity=0.5] (0,0) -- (0,2) -- (1,3) -- (4,3) -- (4,1) -- (3,0) -- cycle;

        \filldraw[black] (1,1) circle (2pt) node[above] {$1$};
        \filldraw[black] (1,2) circle (2pt) node[above] {$1$};
        \filldraw[black] (2,1) circle (2pt) node[above] {$2$};
        \filldraw[black] (2,2) circle (2pt) node[above] {$2$};
        \filldraw[black] (3,1) circle (2pt) node[above] {$3$};
        \filldraw[black] (3,2) circle (2pt) node[above] {$3$};

    

    \end{tikzpicture}
    \hfill
      \begin{tikzpicture}[scale=.75]
        \draw[] (-1.5,2) node {$y = $};

        \fill[blue!20, opacity=0.5] (0,0) -- (0,2) -- (1,3) -- (4,3) -- (4,1) -- (3,0) -- cycle;

          \filldraw[black] (1,1) circle (2pt) node[above] {$1$};
          \filldraw[black] (1,2) circle (2pt) node[above] {$2$};
          \filldraw[black] (2,1) circle (2pt) node[above] {$1$};
          \filldraw[black] (2,2) circle (2pt) node[above] {$2$};
          \filldraw[black] (3,1) circle (2pt) node[above] {$1$};
          \filldraw[black] (3,2) circle (2pt) node[above] {$2$};
  
      

      \end{tikzpicture}
      \hfill
      \begin{tikzpicture}[scale=.75]
        \draw[] (-1,2) node {$z = $};

        \fill[blue!20, opacity=0.5] (0,0) -- (0,2) -- (1,3) -- (4,3) -- (4,1) -- (3,0) -- cycle;

          \filldraw[black] (1,1) circle (2pt) node[above] {$0$};
          \filldraw[black] (1,2) circle (2pt) node[above] {$-1$};
          \filldraw[black] (2,1) circle (2pt) node[above] {$1$};
          \filldraw[black] (2,2) circle (2pt) node[above] {$0$};
          \filldraw[black] (3,1) circle (2pt) node[above] {$2$};
          \filldraw[black] (3,2) circle (2pt) node[above] {$1$};
  
      

      \end{tikzpicture}
\end{center} 
}
Following Proposition \ref{Z-presentations}, we have
$$\cK_-(\chi^!_\Gamma;\Z) = \left\langle \binom{\a_{C_3}-1}{2}, \binom{\a_{C_4}-1}{3},\binom{\a_{C_5}+1}{4} \right\rangle^{\sat}$$
and
$$\cI_-(\chi^!_\Gamma;\Z) = \left\langle \frac{\a_{C_3}^{2}}{2!}, \frac{\a_{C_4}^{3}}{3!}, \frac{\a_{C_5}^{4}}{4!}\right\rangle^{\sat}.$$
Below we provide pictorial representations of the functions $\binom{\a_{C_5}+1}{i}$ for $i\in \{0,1,2,3\}$:
  \begin{center}
    \begin{tikzpicture}[scale=.5]
      \draw[] (-1,2) node {$\!\!\!\!\!\!\!\!\!\!\binom{\a_5+1}{0} = $};

      \fill[blue!20, opacity=0.5] (0,0) -- (0,2) -- (1,3) -- (4,3) -- (4,1) -- (3,0) -- cycle;

        \filldraw[black] (1,1) circle (2pt) node[above] {$1$};
        \filldraw[black] (1,2) circle (2pt) node[above] {$1$};
        \filldraw[black] (2,1) circle (2pt) node[above] {$1$};
        \filldraw[black] (2,2) circle (2pt) node[above] {$1$};
        \filldraw[black] (3,1) circle (2pt) node[above] {$1$};
        \filldraw[black] (3,2) circle (2pt) node[above] {$1$};

    

    \end{tikzpicture}
    \hfill
      \begin{tikzpicture}[scale=.5]
        \draw[] (-1,2) node {$\!\!\!\!\!\!\!\!\!\!\binom{\a_5+1}{1} = $};

        \fill[blue!20, opacity=0.5] (0,0) -- (0,2) -- (1,3) -- (4,3) -- (4,1) -- (3,0) -- cycle;

          \filldraw[black] (1,1) circle (2pt) node[above] {$1$};
          \filldraw[black] (1,2) circle (2pt) node[above] {$0$};
          \filldraw[black] (2,1) circle (2pt) node[above] {$2$};
          \filldraw[black] (2,2) circle (2pt) node[above] {$1$};
          \filldraw[black] (3,1) circle (2pt) node[above] {$3$};
          \filldraw[black] (3,2) circle (2pt) node[above] {$2$};
  
      

      \end{tikzpicture}
      \hfill
      \begin{tikzpicture}[scale=.5]
        \draw[] (-1,2) node {$\!\!\!\!\!\!\!\!\!\!\binom{\a_5+1}{2} = $};

        \fill[blue!20, opacity=0.5] (0,0) -- (0,2) -- (1,3) -- (4,3) -- (4,1) -- (3,0) -- cycle;

          \filldraw[black] (1,1) circle (2pt) node[above] {$0$};
          \filldraw[black] (1,2) circle (2pt) node[above] {$0$};
          \filldraw[black] (2,1) circle (2pt) node[above] {$1$};
          \filldraw[black] (2,2) circle (2pt) node[above] {$0$};
          \filldraw[black] (3,1) circle (2pt) node[above] {$3$};
          \filldraw[black] (3,2) circle (2pt) node[above] {$1$};
  
      

      \end{tikzpicture}
      \hfill
      \begin{tikzpicture}[scale=.5]
        \draw[] (-1,2) node {$\!\!\!\!\!\!\!\!\!\!\binom{\a_5+1}{3} = $};
  
        \fill[blue!20, opacity=0.5] (0,0) -- (0,2) -- (1,3) -- (4,3) -- (4,1) -- (3,0) -- cycle;

          \filldraw[black] (1,1) circle (2pt) node[above] {$0$};
          \filldraw[black] (1,2) circle (2pt) node[above] {$0$};
          \filldraw[black] (2,1) circle (2pt) node[above] {$0$};
          \filldraw[black] (2,2) circle (2pt) node[above] {$0$};
          \filldraw[black] (3,1) circle (2pt) node[above] {$1$};
          \filldraw[black] (3,2) circle (2pt) node[above] {$0$};
  

      \end{tikzpicture}
\end{center}
The filtered pieces of the ring $R(\Gamma;\Z)$ are as follows:
  \begin{align*}
    R_0(\Gamma;\Z) & = \Z \{1\}\,,\\
    R_1(\Gamma;\Z) & =\Z \left\{ 1, \a_{C_4}, \a_{C_3} \right\}\,,\\
    R_2(\Gamma;\Z) & = \Z \left\{ 1, \a_{C_4}, \a_{C_3}, \a_{C_4}\a_{C_3}, \binom{\a_{C_4}}{2} \right\}\,,\\
    R_3(\Gamma;\Z) & = R(\Gamma;\Z).
  \end{align*}
Taking the associated graded, we find that the Poincar\'e polynomial of $\IZ(\Gamma; \Z)$ is $1 + 2t + 2t^2 + t^3$.

One interesting feature of this example is that, while the zonotope $Z(\Gamma)$ is not a rectangle, the set $\cZ_-(\Gamma)$ of interior
lattice points is equal to the product $\{1,2,3\}\times\{1,2\}$.  For this reason, the ring $\IZ(\Gamma; \Z)$ factors as the tensor product
of two truncated divided powers algebras of the form that we saw in Example \ref{ex:cycle}, one with $k=4$ and one with $k=3$.
It also means that the relations $$\binom{\a_{C_5}+1}{4}\in \cK_-(\chi^!_\Gamma;\Z)
\and 
\frac{\a_{C_5}^{4}}{4!} \in \cI_-(\chi^!_\Gamma;\Z)$$
turn out to be redundant.

\section{Graphical configuration spaces}\label{sec:config}
As above, let $\Gamma = (V,A,h,t)$ be a directed graph.
This section is devoted to the study of the space $X(SU(2),\Gamma)$ and its cohomology, with the goal of proving Theorem \ref{cohomology}.
Though the space itself does not depend on the orientation of the arrows, we will make use of the orientation in our analysis.

We will use the letter $\kappa$ to denote a function from $V$ to $G$ with $\kappa(h(a))\neq \kappa(t(a))$ for all $a\in A$.
An element of $X(SU(2),\Gamma)$ will then be an equivalence class $[\kappa]$, where $[\kappa_1] = [\kappa_2]$ if and only if $\kappa_1$
and $\kappa_2$ are related by left multiplication by an element of $G^{\pi_0(\Gamma)}$.

\subsection{Cohomology groups}\label{sec:groups}
In this section, we compute the additive structure on the cohomology of $X(SU(2),\Gamma)$.
As in Section \ref{sec:filtrations}, our main tool will be an induction based on the operations of deletion and contraction.
Suppose that $a\in A$ is neither a loop nor a bridge, let $\Gamma'$ be the graph obtained by deleting $a$, and let $\Gamma''$
be the graph obtained by contracting $a$.

\begin{remark}
In Section \ref{sec:filtrations}, we defined deletion and contraction at the level of vector arrangements.
Note that we have $(\chi_\Gamma)' = \chi_{\Gamma'}$ and $(\chi_\Gamma)'' = \chi_{\Gamma''}$,
while $(\chi_\Gamma^!)' = \chi_{\Gamma''}^!$ and $(\chi_\Gamma^!)'' = \chi_{\Gamma'}^!$.  That is, dualization swaps deletion with contraction.
\end{remark}

We begin by observing that we have an open inclusion $X(G,\Gamma)\subset X(G,\Gamma')$, as well as a canonical
homeomorphism $$X(G,\Gamma')\setminus X(G,\Gamma)\cong X(G,\Gamma'').$$
There is a smooth map
$\psi_a:X(G,\Gamma')\to G$ sending $[\kappa]$ to $\kappa( h(a))^{-1}\kappa( t(a))$.  The identity is a regular value of $\psi_a$,
and the fiber over the identity is equal to the closed subset $X(G,\Gamma'')\subset X(G,\Gamma')$.
This implies that the normal bundle to $X(G,\Gamma'')$ in $X(G,\Gamma')$ is trivial.

\begin{proposition}\label{LES}
Let $d$ be the dimension of $G$.
There is a canonical long exact sequence of integral cohomology groups
$$\cdots\to H^i(X(G,\Gamma');\Z) \to H^i(X(G,\Gamma);\Z) \to H^{i-d+1}(X(G,\Gamma'');\Z)
\to H^{i+1}(X(G,\Gamma');\Z) \to\cdots.$$
\end{proposition}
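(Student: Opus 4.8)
The plan is to realize the claimed long exact sequence as the long exact sequence of the pair $(X(G,\Gamma'), X(G,\Gamma))$, combined with a Thom isomorphism for the closed complement. Concretely, recall from the discussion just before the statement that $X(G,\Gamma)$ is an open subset of $X(G,\Gamma')$, with closed complement canonically homeomorphic to $X(G,\Gamma'')$, and that this complement is the fiber over the identity of the submersion $\psi_a$, hence a smooth closed submanifold with trivial normal bundle of rank $d$. First I would invoke the long exact sequence of the pair in cohomology,
$$\cdots\to H^i\big(X(G,\Gamma'),X(G,\Gamma);\Z\big)\to H^i\big(X(G,\Gamma');\Z\big)\to H^i\big(X(G,\Gamma);\Z\big)\to H^{i+1}\big(X(G,\Gamma'),X(G,\Gamma);\Z\big)\to\cdots,$$
and then identify the relative groups. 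By excision together with the tubular neighborhood theorem, $H^i(X(G,\Gamma'),X(G,\Gamma);\Z)$ is the cohomology of the Thom space of the normal bundle $\nu$ of $X(G,\Gamma'')$ in $X(G,\Gamma')$; since $\nu$ is trivial of rank $d$, the Thom isomorphism gives $H^i(X(G,\Gamma'),X(G,\Gamma);\Z)\cong H^{i-d}(X(G,\Gamma'');\Z)$. Substituting this into the sequence of the pair, and reindexing so that the connecting map lands correctly, produces exactly the stated sequence with the shift $i\mapsto i-d+1$ in the third term.

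The step that requires the most care is making the identification of the relative cohomology canonical and correctly handling the degree shift. The shift in the statement is $i-d+1$ rather than the naive $i-d$ one might first write down; I would double-check this by noting that the connecting homomorphism in the pair sequence raises degree by one, so reading the triple $H^i(X(G,\Gamma'))\to H^i(X(G,\Gamma))\to H^{i+1}(\text{pair})\to H^{i+1}(X(G,\Gamma'))$ and then applying the Thom isomorphism $H^{i+1}(\text{pair})\cong H^{i+1-d}(X(G,\Gamma''))$ yields the map $H^i(X(G,\Gamma);\Z)\to H^{i+1-d}(X(G,\Gamma'');\Z)$, matching the excerpt's $H^{i-d+1}$. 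For the remaining two maps, $H^i(X(G,\Gamma'))\to H^i(X(G,\Gamma))$ is restriction along the open inclusion, and $H^{i-d+1}(X(G,\Gamma''))\to H^{i+1}(X(G,\Gamma'))$ is the Gysin pushforward along the inclusion of the closed submanifold, all of which are canonical.

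One subtlety worth addressing explicitly is smoothness and the manifold structure: $X(G,\Gamma')$ is a quotient of $\Conf(\Gamma',G)$, an open subset of $G^{V}$, by a free proper action of $G^{\pi_0(\Gamma')}$, hence a smooth manifold, and $\psi_a$ is visibly a smooth map; that the identity is a regular value (equivalently that $X(G,\Gamma'')$ is a clean fiber) is exactly the observation already made in the excerpt, so the tubular neighborhood and Thom isomorphism machinery applies without modification. I would also remark that the same argument works for field coefficients or any coefficient ring, since the normal bundle is honestly trivial (not merely orientable), so no orientation hypotheses on $G$ or $\Gamma$ are needed; this is consistent with the fact that later in the paper the sequence is applied with $\Z$ coefficients and $G=SU(2)$, $d=3$. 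Thus the proof reduces to assembling the long exact sequence of a pair, excision, and a trivial-bundle Thom isomorphism, with the only genuinely content-bearing input — the triviality of the normal bundle via the submersion $\psi_a$ — having been established immediately before the statement.
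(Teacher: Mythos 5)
Your proposal is correct and follows essentially the same route as the paper: both use the long exact sequence of the pair $(X(G,\Gamma'), X(G,\Gamma))$, identify the relative cohomology via the tubular neighborhood theorem and excision, and then exploit triviality of the normal bundle to obtain the degree shift by $d$. The only cosmetic difference is that you phrase the last step as a Thom isomorphism for a trivial bundle, while the paper invokes the K\"unneth theorem with $H^d(\R^d,\R^d\setminus\{0\};\Z)$ --- these are the same computation.
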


\begin{proof}
By the Tubular Neighborhood Theorem, the Excision Theorem, and the K\"unneth
Theorem, we have
\begin{eqnarray*}H^{i+1}(X(G,\Gamma'), X(G,\Gamma);\Z)&\cong& H^{i-d+1}(X(G,\Gamma'');\Z)\otimes H^{d}(\R^{d}, \R^d\setminus\{0\};\Z)\\
&\cong& H^{i-d+1}(X(G,\Gamma'');\Z).
\end{eqnarray*}
The proposition is obtained by combining this isomorphism with
the long exact sequence of the pair $(X(G,\Gamma'), X(G,\Gamma))$.
\end{proof}

We define the {\bf rank} of $\Gamma$, denoted $\rk(\Gamma)$, to be the number of vertices minus the number of connected components.
Equivalently, it is the dimension of $X(U(1),\Gamma)$, or the rank of the graphical matroid associated with $\Gamma$.
We write $$T_\Gamma(x,y) = T_{\chi_\Gamma}(x,y) = T_{\chi_\Gamma^!}(y,x)$$ to denote the Tutte polynomial of $\Gamma$.
The next result is specific to the case where $G = SU(2)$.

\begin{proposition}\label{additive}
The integral cohomology of $X(SU(2),\Gamma)$ is torsion-free and vanishes in odd degree,
and we have
$$\sum_{i=0}^\infty t^i \dim H^{2i}(X(SU(2),\Gamma);\Q)= t^{\rk(\Gamma)}T_\Gamma(t^{-1},0).$$
\end{proposition}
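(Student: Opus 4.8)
The plan is to induct on the number of arrows $|A|$, using the deletion–contraction long exact sequence of Proposition \ref{LES} together with the corresponding recursion for the Tutte polynomial. The feature that makes the argument work for $SU(2)$ — and that would break it for most other Lie groups — is that $\dim SU(2) = 3$, so the degree shift $d-1 = 2$ appearing in that long exact sequence is \emph{even}.

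\textbf{Base cases.} If $\Gamma$ has a loop $a$ (i.e.\ $h(a)=t(a)$) then $\Conf(\Gamma,SU(2))=\emptyset$, so $X(SU(2),\Gamma)=\emptyset$ has vanishing cohomology; correspondingly $T_\Gamma(x,y)$ carries a factor of $y$, so $T_\Gamma(t^{-1},0)=0$, and both sides of the claimed identity vanish. If $\Gamma$ has no loop and every arrow is a bridge, then $\Gamma$ is a forest with $k:=|A|$ edges, so $\rk(\Gamma)=k$ and $T_\Gamma(x,y)=x^k$, giving $t^{\rk(\Gamma)}T_\Gamma(t^{-1},0)=1$. On the topological side I would choose a root in each connected component and list the remaining vertices so that each has exactly one earlier neighbour; this exhibits $X(SU(2),\Gamma)$ as an iterated fiber bundle with fibers $SU(2)\setminus\{\mathrm{pt}\}\cong\R^3$, hence as a contractible space, with cohomology $\Z$ in degree $0$. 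In both base cases the cohomology is torsion-free and concentrated in even degree.

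\textbf{Inductive step.} Suppose some arrow $a$ is neither a loop nor a bridge; set $\Gamma'=\Gamma\setminus a$ and $\Gamma''=\Gamma/a$, each with $|A|-1$ arrows, so by induction $H^*(X(SU(2),\Gamma');\Z)$ and $H^*(X(SU(2),\Gamma'');\Z)$ are torsion-free and concentrated in even degree. Plugging this into Proposition \ref{LES} with $d=3$ and tracking parities, one first gets $H^i(X(SU(2),\Gamma);\Z)=0$ for odd $i$ (it sits between two vanishing terms), and then, for even $i$, the long exact sequence breaks into short exact sequences
$$0 \longrightarrow H^{i}(X(SU(2),\Gamma');\Z) \longrightarrow H^{i}(X(SU(2),\Gamma);\Z) \longrightarrow H^{i-2}(X(SU(2),\Gamma'');\Z) \longrightarrow 0 ,$$
whose outer terms are torsion-free, hence so is the middle term. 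Writing $P_\Lambda(t):=\sum_{i\ge0}t^i\dim H^{2i}(X(SU(2),\Lambda);\Q)$, these sequences give $P_\Gamma(t)=P_{\Gamma'}(t)+t\,P_{\Gamma''}(t)$. Since $a$ is neither a loop nor a bridge, the Tutte polynomial satisfies $T_\Gamma=T_{\Gamma'}+T_{\Gamma''}$, while $\rk(\Gamma')=\rk(\Gamma)$ (deleting a non-bridge changes neither the vertex count nor the number of components) and $\rk(\Gamma'')=\rk(\Gamma)-1$ (contracting a non-loop drops the vertex count by one). Combining these with the inductive identities $P_{\Gamma'}(t)=t^{\rk(\Gamma')}T_{\Gamma'}(t^{-1},0)$ and $P_{\Gamma''}(t)=t^{\rk(\Gamma'')}T_{\Gamma''}(t^{-1},0)$ yields $P_\Gamma(t)=t^{\rk(\Gamma)}T_\Gamma(t^{-1},0)$, closing the induction.

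\textbf{Where the difficulty lies.} Granting Proposition \ref{LES}, this is mostly bookkeeping and I do not expect a genuine obstacle; the one step needing care is the parity analysis of the long exact sequence, which uses essentially that $d-1$ is even, together with correctly aligning the degree/sign conventions for the Tutte polynomial and for the rank in the deletion–contraction identity. (One can also observe, using \eqref{IZ-Tutte} and the identification of the cographical matroid with the dual graphic matroid, that $t^{\rk(\Gamma)}T_\Gamma(t^{-1},0)$ is exactly the Poincar\'e polynomial of $\IZ(\Gamma;\Q)$, so Proposition \ref{additive} is equivalent to $\dim H^{2i}(X(SU(2),\Gamma);\Q)=\dim\IZ^i(\Gamma;\Q)$; but the argument above does not invoke the ring isomorphism of Theorem \ref{cohomology}, which is established later.)
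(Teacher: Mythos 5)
Your proof is correct and follows essentially the same route as the paper: the same two base cases (loop and forest), the same use of Proposition \ref{LES} with $d=3$ to get odd-degree vanishing and the short exact sequence in even degrees, and the same Tutte deletion–contraction identity combined with $\rk(\Gamma') = \rk(\Gamma) = \rk(\Gamma'')+1$. The only cosmetic difference is that you unpack the forest base case via an iterated fiber bundle rather than simply citing $X(SU(2),\Gamma)\cong\R^{3\rk(\Gamma)}$.
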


\begin{proof}
If $\Gamma$ has any loops, then $X(SU(2),\Gamma) = \emptyset$ and $T_\Gamma(x,0) = 0$, so the statement is trivial.
If $\Gamma$ is a forest (that is, if every edge is a bridge), then $X(SU(2),\Gamma) \cong \R^{3\rk(\Gamma)}$ and $T_\Gamma(x,0) = x^{\rk(\Gamma)}$,
so again the statement is true.  Now suppose that there is at least one arrow $a$ that is neither a bridge or a loop, and 
assume by induction that the statement holds for both $\Gamma'$ and $\Gamma''$.

The vanishing of the odd degree cohomology of $X(SU(2),\Gamma)$ is immediate from the inductive hypothesis and Proposition \ref{LES},
with $G = SU(2)$ and $d=3$.
Furthermore, Proposition \ref{LES} gives us the short exact sequence
\begin{equation}\label{SES}
                0\to H^{2i}(X(SU(2),\Gamma');\Z)\to H^{2i}(X(SU(2),\Gamma);\Z)\to H^{2i-2}(X(SU(2),\Gamma'');\Z)\to 0.
\end{equation}
The left and right terms are torsion-free by our inductive hypothesis, thus so is the middle term.  After tensoring with $\Q$,
the dimension of the middle term is equal to the sum of the dimensions of the left and right terms.
The proposition then follows from the identity
$$T_\Gamma(x,y) = T_{\Gamma'}(x,y) + T_{\Gamma''}(x,y)$$
along with the observation that $\rk(\Gamma') = \rk(\Gamma) = \rk(\Gamma'') + 1$.
\end{proof}

\begin{remark}
                In light of Theorem~\ref{cohomology}, the short exact 
                sequence (\ref{SES}) is an integral version of the 
                linear dual of the short exact sequence of 
                \cite[Proposition 4.4]{ArPo} associated with the 
                arrangement of hyperplanes normal to $\chi_\Gamma^!$.
\end{remark}

\begin{remark}
Equation \eqref{IZ-Tutte} and Proposition \ref{additive} say that the ring $\IZ(\Gamma)$ and the space $X(SU(2),\Gamma)$ both have Poincar\'e polynomial equal to 
$$t^{\rk(\Gamma)} T_{\chi_\Gamma^!}(0,t^{-1}) = t^{\rk(\Gamma)} T_{\Gamma}(t^{-1},0).$$  These two results are consistent with the statement of Theorem \ref{cohomology}, and will indeed be used in the proof of the theorem.
\end{remark}

\begin{example}
If $\Gamma$ is the graph from Section \ref{ex:house-graph},
Proposition \ref{additive} says that the Poincar\'e polynomial of $X(SU(2),\Gamma)$ is equal to 
$1 + 2t + 2t^2 + t^3$.
\end{example}

\subsection{Equivariant cohomology}\label{sec:ec}
Let $\T$ be a 1-dimensional torus, and $X$ a $\T$-space whose
integral cohomology vanishes in odd degree.
We will be interested in the equivariant cohomology ring $H^*_\T(X;\Z)$, which is a graded algebra over $H^*_\T(*;\Z) \cong \Z[u]$.

For all $i\geq 0$, let $F_i \subset H^*(X^\T;\Z)$ denote the image of the map
$$H^{2i}_\T(X;\Z)\to H^{2i}_\T(X^\T;\Z) \cong \(H^*(X^\T;\Z)\otimes \Z[u]\)^{2i} \overset{u\mapsto 1}{\longrightarrow} H^*(X^\T;\Z).$$
We clearly have $F_i\subset F_{i+1}$ and $F_i\cdot F_j\subset F_{i+j}$, so we have an increasing filtration
$$F_0\subset F_1 \subset F_2 \subset \cdots.$$
This is called the {\bf equivariant filtration}, and it is functorial:  if $f:X\to Y$ is a $\T$-equivariant map,
then $f^*:H^*(Y^\T;\Z)\to H^*(X^\T;\Z)$ takes the $i^\text{th}$ filtered piece of $H^*(Y^\T;\Z)$ to the $i^\text{th}$ filtered piece of $H^*(X^\T;\Z)$.
We can define the {\bf Rees algebra}
$$\Rees H^*(X^\T;\Z) := \bigoplus_{i=0}^\infty u^i F_i \subset H^*(X^\T;\Z) \otimes \Z[u],$$
which is a graded $\Z[u]$-algebra,
and the {\bf associated graded}
$$\gr H^*(X^\T;\Z) := \bigoplus_{i=0}^\infty F_i/F_{i-1} \cong \Rees H^*(X^\T;\Z) / \langle u \rangle.$$

\begin{remark}\label{double}
Note that the degree of $u$ is equal to 2, thus the summand $F_i/F_{i-1}\subset \gr H^*(X^\T;\Z)$ sits in degree $2i$.
\end{remark}

The following theorem is essentially due to Borel \cite{Borel}, but the integral version appears
in \cite[Proposition 2.8]{DBPW}.

\begin{theorem}\label{Rees}
Suppose that $X$ is a finite dimensional manifold and that $\T$ acts freely on $X\setminus X^\T$.
There exist canonical graded ring isomorphisms
$$H^*_\T(X;\Z) \cong \Rees H^*(X^\T;\Z)\and H^*(X;\Z) \cong \gr H^*(X^\T;\Z).$$
\end{theorem}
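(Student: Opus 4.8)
The plan is to run the classical argument of Borel, keeping track of torsion so that everything works integrally; this is the route of \cite[Proposition 2.8]{DBPW}. First I would establish \emph{equivariant formality}. Consider the Serre spectral sequence of the fibration $X\to X\times_\T E\T\to B\T$. Since $B\T$ is simply connected and $H^*(B\T;\Z)\cong\Z[u]$ is free, its $E_2$-page is $\Z[u]\otimes H^*(X;\Z)$, which by the standing hypothesis on $X$ is concentrated in even total degree; as every differential shifts total degree by $1$, the sequence degenerates at $E_2$. Because the $E_\infty$-page is free over $\Z[u]$ — here one uses that $H^*(X;\Z)$ is torsion-free, which holds in the cases of interest, e.g.\ by Proposition~\ref{additive} — the module $H^*_\T(X;\Z)$ is itself free over $\Z[u]$ and concentrated in even degree, and restriction to a fiber gives a surjection $H^*_\T(X;\Z)\twoheadrightarrow H^*(X;\Z)$ with kernel $u\cdot H^*_\T(X;\Z)$.

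Next I would prove that $i^*_\T\colon H^*_\T(X;\Z)\to H^*_\T(X^\T;\Z)=H^*(X^\T;\Z)\otimes\Z[u]$ is injective. Because $\T$ acts \emph{freely} on $X\setminus X^\T$, we have $H^*_\T(X\setminus X^\T;\Z)\cong H^*\bigl((X\setminus X^\T)/\T;\Z\bigr)$, which vanishes above degree $\dim X$ and is hence annihilated by a power of $u$; feeding this into the long exact sequence of the pair $(X,X\setminus X^\T)$, rewritten via the equivariant Thom isomorphism for the $\T$-equivariant normal bundle of the closed submanifold $X^\T\subset X$, shows that $\ker i^*_\T$ and $\operatorname{coker} i^*_\T$ are killed by a power of $u$. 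Since $H^*_\T(X;\Z)$ is $u$-torsion-free by the previous step, $\ker i^*_\T=0$.

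It then remains to identify the image. Let $\rho\colon H^*(X^\T;\Z)\otimes\Z[u]\to H^*(X^\T;\Z)$ be the $\Z$-algebra map $u\mapsto 1$, so that $F_i=\rho\bigl(i^*_\T H^{2i}_\T(X;\Z)\bigr)$ by definition, and set $\Phi(\xi):=u^i\rho(i^*_\T\xi)\in u^iF_i$ for $\xi\in H^{2i}_\T(X;\Z)$. Since the homogeneous components of an element of $H^{2i}_\T(X^\T;\Z)$ lie in the pairwise distinct degrees $2i,2i-2,\dots,0$ of $H^*(X^\T;\Z)$, the operation $z\mapsto u^i\rho(z)$ is injective on $H^{2i}_\T(X^\T;\Z)$; combined with injectivity of $i^*_\T$ this makes $\Phi$ injective, and it is surjective onto $u^iF_i$ by the definition of $F_i$. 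As $i^*_\T$ and $\rho$ are ring maps and $u^i\rho(a)\cdot u^j\rho(b)=u^{i+j}\rho(ab)$, the $\Phi$'s assemble to an isomorphism $H^*_\T(X;\Z)\cong\bigoplus_i u^iF_i=\Rees H^*(X^\T;\Z)$ of graded $\Z[u]$-algebras; reducing modulo $u$ and using both $\gr H^*(X^\T;\Z)\cong\Rees H^*(X^\T;\Z)/\langle u\rangle$ and $H^*_\T(X;\Z)/(u)\cong H^*(X;\Z)$ yields the second isomorphism.

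I expect the main obstacle to be the first two steps: degeneration is painless, but one must genuinely exploit the \emph{free}-action hypothesis (rather than mere finiteness of stabilizers on $X\setminus X^\T$) to see that the complement contributes only $u$-power-torsion, and it is the interplay of that fact with the $u$-torsion-freeness coming from equivariant formality that promotes the usual localization statement (an isomorphism only after inverting $u$) to honest injectivity over $\Z$. The remaining bookkeeping in the third step — checking that $\Phi$ is a well-defined, multiplicative, $\Z[u]$-linear bijection onto $\Rees H^*(X^\T;\Z)$ — is routine once injectivity of $i^*_\T$ is in hand.
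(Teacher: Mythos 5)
The paper does not actually prove Theorem~\ref{Rees}; it is cited as ``essentially due to Borel, with the integral version in \cite[Proposition~2.8]{DBPW},'' so there is no internal proof to compare against. Your argument is the standard Borel/Atiyah--Bott route, and Steps~1 and~3 (degeneration of the Serre spectral sequence, and the bookkeeping identifying $\Rees H^*(X^\T;\Z)$ with $\bigoplus u^iF_i$ via $\Phi(\xi)=u^i\rho(i^*_\T\xi)$) are sound as written, modulo the fact that you quietly add the hypothesis that $H^*(X;\Z)$ is torsion-free to get $\Z[u]$-freeness of $H^*_\T(X;\Z)$; the theorem's stated hypotheses (finite-dimensional manifold, odd cohomology vanishes, free action off $X^\T$) do not formally include this, so the statement being proved is a priori slightly narrower than the one claimed. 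You flag this yourself, and it does hold in all the applications via Proposition~\ref{additive}.

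The genuine gap is in Step~2. The long exact sequence of the pair $(X,X\setminus X^\T)$, rewritten via the Thom isomorphism, has the form
$$\cdots\to H^{*-c}_\T(X^\T)\xrightarrow{\ \iota_*\ } H^*_\T(X)\xrightarrow{\ j^*\ } H^*_\T(X\setminus X^\T)\to\cdots,$$
where $\iota_*$ is the equivariant Gysin \emph{pushforward} and $j^*$ is restriction to the \emph{open complement}. Neither map is $i^*_\T$, the restriction to the closed fixed locus, so knowing $H^*_\T(X\setminus X^\T;\Z)$ is $u$-power torsion tells you directly about $\ker j^*$ and $\operatorname{coker} j^*$, not about $\ker i^*_\T$ and $\operatorname{coker} i^*_\T$. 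To close the gap you need one of: (a) use the long exact sequence of the pair $(X,X^\T)$ instead, and compute $H^*_\T(X,X^\T)\cong H^*_\T(X,U)\cong H^*_\T(X\setminus X^\T,U\setminus X^\T)$ by excision, where $U$ is an invariant tubular neighborhood of $X^\T$ --- here the normal bundle and the Thom class never appear, and the relative group is manifestly $u$-power torsion because $\T$ acts freely on both $X\setminus X^\T$ and $U\setminus X^\T$; (b) run Mayer--Vietoris for $X=U\cup(X\setminus X^\T)$; or (c) keep your sequence but invoke the identity $i^*_\T\circ\iota_*=e(\nu)\cdot(-)$, where $e(\nu)$ is the equivariant Euler class of the normal bundle, and use that $e(\nu)$ becomes invertible after inverting $u$ (its leading $u$-coefficient is a product of nonzero weights). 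As written, the Thom isomorphism is pointing you at the wrong map, and without one of (a)--(c) the claim that $\ker i^*_\T$ is killed by a power of $u$ is not established.
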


\begin{example}\label{cp1}
Let $\T$ act on $X = S^2 = \{(x,y,z)\mid x^2 + y^2 + z^2 = 1\}$ by counterclockwise rotation about the $z$-axis;
the fixed point set consists of the two poles $s = (0,0,-1)$ and $n = (0,0,1)$.
We have $$H^*(X^\T;\Z) = H^0(X^\T;\Z) = \Z\oplus\Z,$$ and $F_0 = \Z\cdot (1,1)$.  We can deduce from Theorem \ref{Rees} that $F_1 = \Z\oplus\Z$,
but let us instead show it directly.

Fix a complex structure (and therefore also an orientation) on $X$ with the property that $\T$ acts on $T_sX$ with weight $-1$
and on $T_nX$ with weight $1$.  Let $\cO(1)$ be the unique complex line bundle with the property that $\cO(1)^{\otimes 2} \cong TX$;
this can also be characterized as the unique line bundle whose first Chern class evaluates to $1$ on the fundamental cycle $[X]$.
There are infinitely many ways to put a $\T$-equivariant structure on $\cO(1)$, any two of which differ by a character of $\T$.
All of them have the property that the weight of the $\T$-action on the fiber $\cO(1)_s$ is one less than the weight of the $\T$-action
on the fiber $\cO(1)_n$.  Choose one of them, with weights $m$ and $m+1$ for some $m\in\Z$.

The $\T$-equivariant Euler class of $\cO(1)$ is an element of $H^2_\T(X;\Z)$.  The restriction of this cohomology class to $X^\T$
is equal to the equivariant Euler class of $\cO(1)|_{X^\T}$.  By our statement about the weights, this class is equal to $(mu,(m+1)u)$.
Setting $u$ equal to 1, we find that the class $(m,m+1)$ is contained in $F_1$.  Since $F_1$ contains both $(1,1)$ and $(m,m+1)$, 
it is equal to $\Z\oplus\Z$.
%
\end{example}

\begin{example}\label{SU2U1}
For any $z\in U(1)$ and $$\begin{pmatrix}a & b\\ c& d\end{pmatrix}\in SU(2),$$
we have
$$\begin{pmatrix}z & 0\\ 0& z^{-1}\end{pmatrix}
\begin{pmatrix}a & b\\ c& d\end{pmatrix}
\begin{pmatrix}z & 0\\ 0& z^{-1}\end{pmatrix}^{-1}
= \begin{pmatrix}a & z^2b\\ z^{-2}c& d\end{pmatrix}.$$
It follows that the conjugation action of $U(1)$ on $SU(2)$ descends to an action of $\T := U(1)/\{\pm 1\}$,
the fixed point set is $U(1)$, and the action is free away from the fixed point set.
This translates to the statement that the action of $\T$ on $X(SU(2),\Gamma)$ by inverse right multiplication has fixed point
set $X(U(1),\Gamma)$, and the action is free away from the fixed point set.
Proposition \ref{additive} tells us that the cohomology of $X(SU(2),\Gamma)$ vanishes in odd degree,
so Theorem \ref{Rees} applies to $X(SU(2),\Gamma)$.
\end{example}

\subsection{Homotopy equivalence}\label{sec:homotopy}
The goal of this section is to construct a canonical homotopy equivalence from $X(U(1),\Gamma)$
to the discrete space $\cZ_-(\Gamma)$.  We begin by introducing a new manifold that is diffeomorphic to $X(G,\Gamma)$
for any Lie group $G$.

Let $Y(G,\Gamma)$ denote the space of maps $\lambda:A\to G\setminus\{\id\}$ with the property that 
$$\lambda(a_1)^{\eps_1}\cdots\lambda(a_k)^{\eps_k} = \id$$
for all oriented cycles $\{(a_1,\eps_1),\ldots,(a_k,\eps_k)\}$.  
Note that this condition is unchanged by cyclic reordering of the arrows.
An element of $Y(G,\Gamma)$ is called a {\bf balanced \boldmath{$G$}-gain graph} with underlying graph $\Gamma$ and all coefficients nontrivial; see for example
\cite{zaslavsky-biased-graphs1,gain}.
We have a diffeomorphism
\begin{equation}\label{diffeo}f:X(G,\Gamma)\overset{\cong}{\longrightarrow} Y(G,\Gamma)\end{equation}
given by putting $f([\kappa])(a) := \kappa( t(a))^{-1}\kappa( h(a))$ for all $a\in A$.
If $G = SU(2)$, we have an action of $\T$ on $Y(SU(2),\Gamma)$ by conjugation, and the diffeomorphism $f$ is $\T$-equivariant.

Let $C_1(\Gamma;\R)$ be the vector space of cellular 1-chains on $\Gamma$, regarded as a cell complex, and let $C^1(\Gamma;\R)$ be the
dual space of cellular 1-cochains.  Since $\Gamma$ has no 2-cells, we have an
injection $$H_1(\Gamma; \R) \overset{\iota}{\longrightarrow} C_1(\Gamma; \R)$$ along with a dual surjection
$$H^1(\Gamma; \R) \overset{\pi}{\longleftarrow} C^1(\Gamma; \R).$$
Given an oriented cycle $C = \{(a_1,\eps_1),\ldots,(a_k,\eps_k)\}\subset A\times\{\pm 1\}$, we have a corresponding homology class 
$[C]\in H_1(\Gamma; \Z)$ characterized by the equation
$$\iota[C] = \sum_{i=1}^k \eps_i a_i\in C_1(\Gamma; \Z) \subset C_1(\Gamma; \R).$$

We have an embedding $$\zeta: Y(U(1),\Gamma)\to C^1(\Gamma;\R)$$
given by the formula $$\zeta(\lambda)(a) := \arg(\la(a)),$$
where $\arg:U(1)\setminus\{\id\}\to (0,1)$ is the normalized argument function (that is, the usual argument function divided by $2\pi$).  For any oriented cycle 
$C$, we have $$\lambda(a_1)^{\eps_1}\cdots\lambda(a_k)^{\eps_k} = \id,$$
and therefore $$\Big\langle \zeta(\lambda), \iota[C]\Big\rangle \in \Z.$$
That is, we have $$\zeta(\la) \in \pi^{-1}H^1(\Gamma;\Z) = \ker(\pi) + C^1(\Gamma; \Z).$$  Since $Y(U(1),\Gamma)$ is cut out by precisely these equations, we may conclude that
\begin{equation}\label{image}\im(\zeta) = (0,1)^A \cap \pi^{-1}H^1(\Gamma;\Z) \subset C^1(\Gamma; \R).\end{equation}

\begin{proposition}\label{homotopy equivalence}
The map $\pi\circ \zeta:Y(U(1),\Gamma)\to \cZ_-(\Gamma)$ is a homotopy equivalence.
That is, each connected component of $Y(U(1),\Gamma)$ is contractible, and $\pi_0(Y(U(1),\Gamma)$ is in bijection with $\cZ_-(\Gamma)$.
\end{proposition}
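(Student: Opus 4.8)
The plan is to read off the homotopy type of $Y(U(1),\Gamma)$ directly from the description of $\im(\zeta)$ in \eqref{image}. Identify $C^1(\Gamma;\R)$ with $\R^A$ via the dual basis $\{a^*\}_{a\in A}$, so that by \eqref{image} the embedding $\zeta$ identifies $Y(U(1),\Gamma)$ with $(0,1)^A\cap\pi^{-1}\!\bigl(H^1(\Gamma;\Z)\bigr)$. For each $m\in H^1(\Gamma;\Z)$ put $P_m:=(0,1)^A\cap\pi^{-1}(m)$. Since $\pi$ is linear, $\pi^{-1}(m)$ is an affine subspace, so each $P_m$ is convex, hence either empty or contractible. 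The sets $P_m$ are pairwise disjoint with union $\im(\zeta)$, and each is relatively clopen in $\im(\zeta)$: if $U$ is a ball around $m$ in $H^1(\Gamma;\R)$ containing no other lattice point then $P_m=\im(\zeta)\cap\pi^{-1}(U)$ is relatively open, and its complement $\bigsqcup_{m'\neq m}P_{m'}$ is relatively open for the same reason. A nonempty connected clopen subset is a connected component, so the nonempty $P_m$ are exactly the connected components of $Y(U(1),\Gamma)$. This already shows each component is contractible, and that $\pi\circ\zeta$ (which is the constant $m$ on the component $P_m$) induces a bijection from $\pi_0\bigl(Y(U(1),\Gamma)\bigr)$ onto $\{m\in H^1(\Gamma;\Z):P_m\neq\emptyset\}$.

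It remains to identify this set of lattice points with $\cZ_-(\Gamma)$. Unwinding the definition of the cographical vector arrangement --- or comparing with the conventions of Section~\ref{sec:cographical}, where $\a_C=\sum_i\eps_i a_i\in H_1(\Gamma;\Z)$ pairs with $H^1(\Gamma;\Z)$ --- one checks that $\chi_\Gamma^!(a)=\pi(a^*)$ for every $a\in A$. Thus the linear extension $\R^A\to H^1(\Gamma;\R)$ of $\chi_\Gamma^!$ is precisely $\pi$, and $Z(\chi_\Gamma^!)=\pi\bigl([0,1]^A\bigr)$. Hence the proposition reduces to the purely convex-geometric identity
\[
\pi\bigl((0,1)^A\bigr)=\operatorname{int}\bigl(\pi([0,1]^A)\bigr),
\]
valid for any surjective linear map $\pi$: intersecting both sides with $H^1(\Gamma;\Z)$ gives $\{m:P_m\neq\emptyset\}=H^1(\Gamma;\Z)\cap\operatorname{int}\bigl(Z(\chi_\Gamma^!)\bigr)=\cZ_-(\Gamma)$, and the bijection above is then the asserted homotopy equivalence.

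For the displayed identity, the inclusion $\subseteq$ holds because a surjective linear map is open, so $\pi((0,1)^A)$ is an open subset of $Z:=\pi([0,1]^A)$. For $\supseteq$, let $m\in\operatorname{int}(Z)$, let $v=(\tfrac12,\dots,\tfrac12)$ be the center of the cube, and choose $\delta>0$ small enough that $m':=m+\delta\bigl(m-\pi(v)\bigr)$ still lies in $Z$; write $m'=\pi(c_1)$ with $c_1\in[0,1]^A$. Then $m=\tfrac{\delta}{1+\delta}\pi(v)+\tfrac{1}{1+\delta}\pi(c_1)=\pi(w)$ with $w=\tfrac{\delta}{1+\delta}v+\tfrac{1}{1+\delta}c_1$, and every coordinate of $w$ lies in $\bigl[\tfrac{\delta}{2(1+\delta)},\tfrac{2+\delta}{2(1+\delta)}\bigr]\subset(0,1)$, so $m\in\pi((0,1)^A)$.

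The only genuine content is this last polytope identity --- equivalently, the fact that every interior point of the zonotope is the image of an interior point of the cube; the reduction to $\im(\zeta)$, the convexity of the fibers $P_m$, the clopen bookkeeping, and the identification $\chi_\Gamma^!(a)=\pi(a^*)$ are all formal given \eqref{image} and the definitions. I expect no surprises, though one should be a little careful in the degenerate cases (graphs with loops or bridges), which are handled uniformly by the fact that \eqref{image} already encodes all oriented-cycle constraints, including the length-one constraint coming from a loop.
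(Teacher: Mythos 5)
Your proof is correct and follows essentially the same approach as the paper: identify $Y(U(1),\Gamma)$ with $(0,1)^A\cap\pi^{-1}H^1(\Gamma;\Z)$ via $\zeta$, observe that each fiber of $\pi$ is convex hence contractible, and use the zonotope identity $\pi\bigl((0,1)^A\bigr)=\operatorname{int}\bigl(\pi([0,1]^A)\bigr)$ to match the image with $\cZ_-(\Gamma)$. The only difference is that you spell out two points the paper treats as implicit --- the clopen bookkeeping showing the fibers $P_m$ are exactly the connected components, and a direct proof of the polytope identity --- which makes the argument a bit more self-contained but does not change its substance.
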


\begin{proof}
We first argue that $\pi\circ \zeta$ is surjective.  We have
$Z(\Gamma) := \pi([0,1]^A)$ and therefore $$\cZ_-(\Gamma) = \pi((0,1)^A) \cap H^1(\Gamma;\Z) = \pi\((0,1)^A \cap \pi^{-1}H^1(\Gamma;\Z)\),$$
so surjectivity of $\pi\circ \zeta$ follows from Equation \eqref{image}.
For any $q\in \cZ_-(\Gamma)$, 
we can choose $p\in C^1(\Gamma; \Z)$ with $\pi(p) = q$, and then observe that
$$(\pi\circ \zeta)^{-1}(q) \cong \pi^{-1}(q) \cap \im(\zeta) =  (0,1)^A \cap \left(\ker(\pi) + p\right)$$ is convex, and in particular contractible.
\end{proof}

From Equation \eqref{diffeo} and Proposition \ref{homotopy equivalence},
we now have a homotopy equivalence $$\theta := \pi\circ \zeta\circ f:X(U(1),\Gamma)\to \cZ_-(\Gamma),$$ and therefore an isomorphism
$$\theta^*:R(\Gamma;\Z)\to H^0(X(U(1),\Gamma);\Z).$$
%
If we can show that the isomorphism $\theta^*$ takes the filtration of $R(\Gamma;\Z)$ defined in Section \ref{sec:new}
to the equivariant filtration of $H^*(X(U(1),\Gamma);\Z)$, then Theorem \ref{cohomology} will follow from Theorem \ref{Rees}.
The fact that the first isomorphism in Theorem \ref{cohomology} halves degrees is a consequence of the fact that
$R_i(\Gamma;\Z)/R_{i-1}(\Gamma;\Z)\subset \IZ(\Gamma;\Z)$ sits in degree $i$, while $F_i/F_{i-1}\subset H^*(X(SU(2),\Gamma);\Z)$
sits in degree $2i$ (see Remark \ref{double}).
Proving that $\theta^*$ is compatible with the two filtrations will be the goal of the next two sections.

\subsection{The case of a cycle}\label{sec:cycle}
Let $C_k$ be the cycle graph with vertex set $\{1,\ldots,k\}$ and arrow set $\{a_1,\ldots,a_k\}$,
where $a_i$ either points from $i$ to $i+1$ or from $i+1$ to $i$ for all $1\leq i\leq k-1$ and $a_k$ either points from $k$ to $1$ or from $1$ to $k$.
We will assume that $C = \{(a_1,\eps_1),\ldots,(a_k,\eps_k)\}$ is an oriented cycle with $t(\eps_i a_i) = i$ for all $1\leq i\leq k$,
which implies that $h(\eps_i a_i) = i+1$ for $i<k$ and $h(\eps_k a_k) = 1$.
Using the oriented cycle to identify $\Lambda = H^1(C_k;\Z)$ with $\Z$, we have
$$\cZ_-(C_k) =
(-|C^-|,|C^+|)\cap \Z
= \{-|C^-|+1,\cdots, |C^+|-1\}
\;\subset\; \Z = H^1(C_k;\Z),$$
where $C^+$ and $C^-$ are as in Section \ref{sec:cographical}.
In the case where $\eps_i=1$ for all $i$, this coincides with the vector arrangement in Example \ref{ex:cycle}.

Recall the homotopy equivalence $$\theta:X(U(1),C_k)\to \cZ_-(C_k)$$
from Section \ref{sec:homotopy}.  In coordinates, if $\kappa = (\kappa_1,\ldots,\kappa_k)\in U(1)^k$ with $\kappa_i\neq \kappa_{i+1}$ 
for all $1\leq i \leq k-1$ and $\kappa_k\neq \kappa_1$, then 
\begin{eqnarray*}\theta([\kappa]) &=& \eps_1\arg((\kappa_1^{-1}\kappa_2)^{\eps_1}) + \eps_2\arg((\kappa_2^{-1}\kappa_3)^{\eps_2})
+ \cdots + \eps_{k-1}\arg((\kappa_{k-1}^{-1}\kappa_k)^{\eps_{k-1}}) + \eps_k\arg((\kappa_k^{-1}\kappa_1)^{\eps_k})\\
&=& -|C^-| + \arg(\kappa_1^{-1}\kappa_2) + \arg(\kappa_2^{-1}\kappa_3)
+ \cdots + \arg(\kappa_{k-1}^{-1}\kappa_k) + \arg(\kappa_k^{-1}\kappa_1).\end{eqnarray*}
That is, we walk from $\kappa_1$ to $\kappa_2$
to $\kappa_3$ and so on, eventually coming back to $\kappa_1$.  For the $i^\text{th}$ step, we walk counter-clockwise if $\eps_i=1$
and clockwise if $\eps_i=-1$.  In the end, $\theta([\kappa])$ records the winding number of our path, which lies strictly between $-|C^-|$ and $|C^+|$.

In this section, we will show that $F_i =  \theta^* R_i(C_k;\Z) \subset R(C_k;\Z)$, where $F_i$ is the $i^\text{th}$
piece of the equivariant filtration of $H^0(X(U(1),C_k);\Z)$.
Let $\eta\in R_1(C_k;\Z)$ be the identity function; that is, $\eta(m) := m$ for all $m\in\cZ_-(C_k)$.  
The proof of following lemma is the most technical part of the paper.

\begin{lemma}\label{heart}
We have $\theta^*(\eta)\in F_1 \subset H^0(X(U(1),C_k);\Z)$.
\end{lemma}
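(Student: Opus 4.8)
The goal is to exhibit an equivariant cohomology class on $X(SU(2),C_k)$ whose restriction to the fixed locus $X(U(1),C_k)$, after setting $u=1$, equals the winding-number function $\eta$. The natural way to produce such a class is via an equivariant map and an equivariant Euler class, mimicking Example~\ref{cp1}. The plan is to construct a $\T$-equivariant map $$\Phi:X(SU(2),C_k)\longrightarrow \P^1,$$ where $\P^1 = S^2$ carries the $\T$-action of Example~\ref{cp1} (rotation with weights $\pm 1$ at the two poles), pull back the equivariant Euler class $e_\T(\cO(1))\in H^2_\T(\P^1;\Z)$ along $\Phi$, and compute its restriction to each connected component of $X(U(1),C_k)$.

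\textbf{Construction of $\Phi$.} Recall the diffeomorphism $f:X(SU(2),C_k)\xrightarrow{\cong} Y(SU(2),C_k)$ of Equation~\eqref{diffeo}, so a point is recorded by group elements $g_i := \kappa(t(\eps_i a_i))^{-1}\kappa(h(\eps_i a_i))\in SU(2)\setminus\{\id\}$ subject to $g_1^{?}\cdots$ (the cycle relation, suitably signed). The first idea is to form a single element $g := g_1\cdots g_k\in SU(2)$ by multiplying around the cycle --- but on $Y(SU(2),C_k)$ this product is forced to be $\pm\id$ (it equals $\id$ in the $U(1)$ case, and in general lies in the center only for the gain-graph relation $=\id$; the correct statement is that the cycle relation $\lambda(a_1)^{\eps_1}\cdots\lambda(a_k)^{\eps_k}=\id$ holds identically on $Y(SU(2),C_k)$ only when there are no nontrivial cycles, which is false here). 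So instead I would take a \emph{partial} product, or better: use the position of a single vertex relative to its neighbor. Concretely, identify $SU(2)\setminus\{\id\}\cong \R^3\cong$ (an open cone), and note that $SU(2)/\!\sim$ under conjugation by $\T=U(1)/\{\pm1\}$, restricted to the complement of the fixed locus $U(1)$, is the total space of a bundle over the space of conjugacy classes; the quotient map $SU(2)\setminus U(1)\to (\text{open interval of conjugacy classes})$ has fibers $\cong \T$-orbits $\cong \P^1\setminus\{2\text{ pts}\}$. Composing $g_1:X(SU(2),C_k)\to SU(2)\setminus\{\id\}$ with a $\T$-equivariant map $SU(2)\setminus\{\id\}\to \P^1$ (collapsing the conjugacy-class direction, which is $\T$-invariant, onto $\P^1 = S^2$ via a degree-one map sending $U(1)^+$ and $U(1)^-$ --- the two ``halves'' of the maximal torus --- to the two poles) gives the desired $\Phi$. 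The $\T$-fixed points of $X(SU(2),C_k)$ map into the fixed points $\{n,s\}$ of $\P^1$ according to the sign of the winding --- this is the key geometric point to pin down.

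\textbf{Computing the restriction.} On the fixed locus $X(U(1),C_k)$, pull $e_\T(\cO(1))$ back: on a component corresponding to winding number $m\in\cZ_-(C_k)$, the map $\Phi$ lands at the pole $n$ if $m>0$-ish and $s$ otherwise, but more precisely the relevant invariant is the \emph{number of times} the piecewise path wraps, and the composite $\T$-weight picked up is additive over the $k$ edges. Using the weight bookkeeping of Example~\ref{cp1} (weight $m$ at one pole, $m+1$ at the other for $\cO(1)$) together with the fact that the winding number decomposes as $\theta([\kappa]) = -|C^-| + \sum_i \arg(\kappa_i^{-1}\kappa_{i+1})$ (from Section~\ref{sec:cycle}), I would show the pulled-back equivariant Euler class restricts on the component indexed by $m$ to $m\cdot u$ (up to an overall additive constant class in $F_0$, which can be absorbed since $1\in F_0$). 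Setting $u=1$ then gives $\theta^*(\eta) = [m\mapsto m] \in F_1$, which is exactly the claim. The additive normalization is fixed by checking one or two explicit components (e.g. the component where all $\kappa_i$ are near a common point, giving winding $0$ or $-|C^-|+1$).

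\textbf{Main obstacle.} The hard part is constructing $\Phi$ correctly and verifying its $\T$-equivariance and its behavior on fixed points. The subtlety is that $X(SU(2),C_k)$ is a quotient by $SU(2)^{\pi_0}$ acting by \emph{left} translation, while $\T$ acts by inverse \emph{right} multiplication, so I must build $\Phi$ out of \emph{conjugation-invariant, left-translation-invariant} data. The element $g_1 = \kappa(t(\eps_1 a_1))^{-1}\kappa(h(\eps_1 a_1))$ is left-invariant, and $\T$ acts on it by conjugation, so $\Phi := (\text{equivariant } SU(2)\setminus\{\id\}\to\P^1)\circ g_1$ does have the right equivariance --- but then the answer would only see the ``first edge'' and not the full winding number. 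The resolution is that on the fixed locus, all $g_i$ lie in $U(1)$ and their product relation forces the local data at vertex $1$ to \emph{determine} the total winding; but globally on $X(SU(2),C_k)$ one needs a map that is honestly sensitive to all edges. I expect the correct construction uses the full piecewise-geodesic loop $\kappa(1)\to -\kappa(2)\to\kappa(3)\to\cdots$ in $SU(2)\cong S^3$ described in the introduction's cycle remark, producing a based loop in $SU(2)$ (an element of $\Omega SU(2)$), and $\Phi$ is the composition of that with evaluation/projection to $\P^1 = \Omega_1 SU(2)$-ish; equivalently, using $X(SU(2),C_k)\hookrightarrow \Omega SU(2)$ and the known $\T$-equivariant structure on $\Omega SU(2)$. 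Making this last identification precise, and checking it pulls the generator of $H^2_\T$ to a class restricting correctly to $\eta$, is the technical heart, which is presumably why the authors flag this lemma as the most technical part of the paper.
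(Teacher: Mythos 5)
Your proposal correctly identifies the general shape of the argument --- produce a $\T$-equivariant line bundle on $X(SU(2),C_k)$, restrict it to the fixed locus, and read off the weight function component by component via the bookkeeping of Example~\ref{cp1} --- but the specific construction you sketch does not close, and the paper takes a genuinely different route at the decisive step.

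The gap is in the construction of $\Phi$. You propose $\Phi = (\text{collapse})\circ g_1$, where $g_1 = \kappa_1^{-1}\kappa_2$ and the collapse map sends $SU(2)\setminus\{\id\}$ to $\P^1=S^2$ by forgetting the conjugacy-class (``radial'') coordinate. A $\T$-equivariant continuous map must carry each connected component of the fixed locus $X(U(1),C_k)$ to a single fixed point of $\P^1$, hence to either $n$ or $s$. But on any one fixed component, the element $g_1$ sweeps out all of $U(1)\setminus\{\id\}$ (the individual edge arguments vary freely within a component of fixed winding number), and your collapse map sends the arc $\arg\in(0,1/2)$ to one pole and the arc $\arg\in(1/2,1)$ to the other --- and is not even defined at $-\id$, which $g_1$ attains. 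So $\Phi$ as described either fails to exist, or is constant on fixed components and produces nothing outside $F_0$; either way it cannot detect the winding number. You flag this problem yourself, and your proposed repair via the piecewise-geodesic embedding into $\Omega SU(2)$ is consistent with the picture in the introduction, but you do not carry it out, and making the equivariant geometry of $\Omega SU(2)$ precise enough to extract the winding-number weight is itself a substantial undertaking --- not a fix but a different, harder project.

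The paper avoids the need for any global map to $\P^1$. It deletes the edge $a_k$ to realize $X(SU(2),C_k)$ as an open subset of $X(SU(2),P_{k-1})\cong\R^{3(k-1)}$, with closed complement $X(SU(2),C_{k-1}) = \psi^{-1}(\id)$ where $\psi([\kappa]) = \kappa_1^{-1}\kappa_k$. The class $e\in H^2(X(SU(2),C_k);\Z)$ is the \emph{linking class} of this codimension-3 submanifold, produced by the Gysin isomorphism of Proposition~\ref{LES}, not by pulling back from $\P^1$. (Indeed a global map to $\P^1$ inducing $e$ would force $e^2=0$, while $e$ generates a ring of Poincar\'e polynomial $1+t+\cdots+t^{k-2}$.) One then fixes a $\T$-equivariant lift of $e$, normalizes its weight on a single component $\theta^{-1}(-|C^-|+1)$, and propagates by induction: for each $m$ one writes down an explicit small $\T$-equivariant sphere $\xi_\varepsilon:S^2\to X(SU(2),C_k)$ with $\psi\circ\xi_\varepsilon = \iota_\varepsilon$, whose south pole lies in $\theta^{-1}(m)$ and north pole in $\theta^{-1}(m+1)$; since $\xi_\varepsilon^*L\cong\cO(1)$, Example~\ref{cp1} forces the weight to increase by exactly one across that sphere. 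So the winding number enters not through a single global map but through a chain of local linking computations, one for each pair of adjacent components. That inductive, spheres-linking-the-divisor mechanism is the key idea missing from your proposal.
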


\begin{proof}
Consider the arrow $a_k$ of $C_k$.  
The graph obtained by deleting this arrow is a path $P_{k-1}$ of length $k-1$, and the graph obtained by contracting this
arrow can be identified with the $(k-1)$-cycle $C_{k-1}$.
In particular, we have an open embedding of $X(SU(2),C_k)$ into 
$$X(SU(2),P_{k-1}) \cong (SU(2)\setminus\{\id\})^{k-1}\cong \R^{3(k-1)},$$
and the complement is identified with $X(SU(2),C_{k-1})$.
Moreover, we have a smooth map\footnote{The map $\psi$ is equal to the map $\psi_{a_k}$ from Section \ref{additive} if $\eps_k=1$,
and to the composition of this map with the inverse function if $\eps_k=-1$.}
\begin{align*}\psi:X(SU(2),P_{k-1})&\to SU(2)\\
[(\kappa_1,\ldots,\kappa_k)] &\mapsto \kappa_1^{-1}\kappa_k
\end{align*}
with the property that the identity is a regular value, the fiber over the identity
is equal to $X(SU(2),C_{k-1})$, and the preimage of $SU(2)\setminus\{\id\}$ is equal to $X(SU(2),C_k)$.

Proposition \ref{LES} provides us with an isomorphism
$$H^2(X(SU(2),C_k);\Z)\cong H^0(X(SU(2),C_{k-1});\Z),$$
and we will write $e\in H^2(X(SU(2),C_k);\Z)$ to denote the element of the left-hand side that is identified with the
canonical generator of the right-hand side.
Informally, $e$ is characterized by the property that it takes the value 1 on the homology class of a small 2-sphere that links
with $X(SU(2),C_{k-1})\subset X(SU(2),P_{k-1})$.

We now give a more precise characterization of the class $e$.
Recall from Example \ref{cp1} that we have fixed a complex structure, orientation, $\T$-action, and line bundle $\cO(1)$ on 
the 2-sphere $S^2$,
and that we write $s=(0,0-1)$ and $n=(0,0,1)$ for the two poles.
For any small number $\varepsilon>0$, consider the 
homeomorphism from $S^2$ to the boundary of a small ball around $\id\in SU(2)$
given by the formula $$\iota_\varepsilon(x,y,z) := \exp\left(2\pi \varepsilon\begin{pmatrix}zi & x+yi\\ -x+yi & -zi \end{pmatrix}\right)
= \cos(2\pi \varepsilon)\cdot \id + \sin(2\pi \varepsilon) \cdot \begin{pmatrix}zi & x+yi\\ -x+yi & -zi \end{pmatrix}.$$
Note in particular that
$$\iota_\varepsilon(s) = \begin{pmatrix}e^{-2\pi i\varepsilon} & 0\\ 0 & e^{2\pi i\varepsilon} \end{pmatrix}
\and
\iota_\varepsilon(n) = \begin{pmatrix}e^{2\pi i\varepsilon} & 0\\ 0 & e^{-2\pi i\varepsilon} \end{pmatrix}.$$
Now suppose that we have an inclusion $\xi:S^2\to X(SU(2),C_k)$ such that $\psi\circ\xi=\iota_\varepsilon$.
The class $e\in H^2(X(SU(2),C_k);\Z)$ is characterized by the property that
$\big\langle e, [\xi]\big\rangle = 1$.  Equivalently, the complex line bundle $L$ on $X(SU(2),C_k)$ with Euler class $e$
is characterized by the property that $\xi^*L$ is isomorphic to $\cO(1)$. 

By Theorem \ref{Rees}, $e$ lifts to a $\T$-equivariant cohomology class, thus $L$ admits a $\T$-equivariant structure.
There are infinitely many such choices, any two of which differ by a character of $\T$.  We will chose the one that is uniquely
characterized by the property that $\T$ acts with weight $-|C^-|+1$ on the fiber $L_{[\kappa]}$
for all ${[\kappa]}$ in the connected component $\theta^{-1}(-|C^-|+1)\subset X(U(1),C_k)$.

Let $\rho\in F_1\subset H^0(X(U(1),C_k);\Z)$ be the class obtained by restricting the $\T$-equivariant Euler class of $L$ to $X(U(1),C_k)$
and then setting $u=1$.  In concrete terms, the value of $\rho$ at a point $[\kappa]\in X(U(1),C_k)$ is equal to the weight of the $\T$-action on the fiber $L_{[\kappa]}$.  In particular, $\rho([\kappa])=-|C^-|+1$ for all $[\kappa]\in \theta^{-1}(-|C^-|+1)$.  We will prove by induction that 
$\rho([\kappa]) = m$ for all $[\kappa]\in \theta^{-1}(m)$, and therefore that 
$\rho=\theta = \theta^*(\eta)$.  This will complete the proof of the 
lemma.

\excise{
For any positive real number $r$, consider the $\T$-equivariant map
$\iota_r:S^2\to SU(2)\setminus\{\pm\id\}$ given by the formula
$$\iota_r(x,y,z) := \frac{1}{r^2+1}\begin{pmatrix}r^2-1+2zi & 2x+2yi\\ -2x+2yi & r^2-1-2zi \end{pmatrix}.$$
Note that $\iota_1$ coincides with the map $\iota$ defined above, and therefore $\iota_r$ is homotopic to $\iota$ for all $r$.   
By taking $r$ to be sufficiently large, we may ensure that the image of
$\iota_r$ is arbitrarily close to the element $\id\in SU(2)$.  
In particular,
$$\iota_r(n) = \iota_r(0,0,1) = \begin{pmatrix}\frac{r^2-1+2i}{r^2+1} & 0\\ 0 & \frac{r^2-1-2i}{r^2+1} \end{pmatrix}$$
is an element of $U(1)\setminus\{\id\}$ with normalized argument very close to $0$, while
$$\iota_r(s) = \iota_r(0,0,-1) = \begin{pmatrix}\frac{r^2-1-2i}{r^2+1} & 0\\ 0 & \frac{r^2-1+2i}{r^2+1} \end{pmatrix}$$
is an element of $U(1)\setminus\{\id\}$ with normalized argument very close to $1$.
}

For our induction, we assume that $\rho([\kappa]) = -|C^-| + r$ for all $[\kappa]\in\theta^{-1}(-|C^-| + r)$
some particular $r<k-1$, and we will prove that the same property holds for $r+1$.
Choose a positive number $\varepsilon < \frac{r}{k-1}$
and consider the $\T$-equivariant inclusion $\xi_\varepsilon:S^2\to X(SU(2),C_k)$ given by the formula
$$\xi_\varepsilon(x,y,z) := \left[\left(1,e^{2\pi r i/(k-1)},e^{4\pi r i/(k-1)},\ldots,e^{2(k-2)\pi r i/(k-1)},\iota_\varepsilon(x,y,z)\right)\right].$$
Our choice of $\varepsilon$ guarantees that $\xi_\varepsilon$ is well-defined, 
and that we have $\theta(\xi_\varepsilon(s)) = -|C^-| + r$ and $\theta(\xi_\varepsilon(n)) = -|C^-| + r+1$.

We have $$\psi\circ\xi_\varepsilon = \iota_\varepsilon,$$
which implies that $\xi_\varepsilon^*L$ is (nonequivariantly) 
isomorphic to $\cO(1)$.  By Example \ref{cp1}, the $\T$-weight $\rho(\xi_\varepsilon(s))$ of the fiber of $\xi_\varepsilon^*L$ at $s$ 
is one less than the $\T$-weight $\rho(\xi_\varepsilon(n))$ of the fiber at $n$.  
Our inductive hypothesis says that the former is equal to $-|C^-| + r$, therefore the latter is equal to $-|C^-| + r+1$.  This completes the induction.
\end{proof}

\begin{proposition}\label{cycle case}
For all $i$, we have $F_i =  \theta^* R_i(C_k;\Z)$.
\end{proposition}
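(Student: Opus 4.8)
The plan is to extract Proposition~\ref{cycle case} from Lemma~\ref{heart} by a purely formal ``saturation and rank'' argument, with no further input from the geometry of $X(SU(2),C_k)$.

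First I would assemble the structural facts about the two filtrations. On the topological side, Example~\ref{SU2U1} shows that $X(U(1),C_k)$ is the $\T$-fixed locus of $X(SU(2),C_k)$ and that $\T$ acts freely on the complement, so Theorem~\ref{Rees} applies: it identifies $\gr H^0(X(U(1),C_k);\Z)=\bigoplus_i F_i/F_{i-1}$ with $H^*(X(SU(2),C_k);\Z)$, which by Proposition~\ref{additive} is torsion-free and concentrated in even degrees. Hence every successive quotient $F_i/F_{i-1}$ is torsion-free; together with the fact that the equivariant filtration is exhaustive (it stabilizes and fills up $H^0(X(U(1),C_k);\Z)$, by the localization theorem, since $\T$ acts freely off the fixed locus), this forces each $F_i$ to be a \emph{saturated} subgroup of $H^0(X(U(1),C_k);\Z)$. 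On the combinatorial side, $\cZ_-(C_k)$ is a set of $k-1$ consecutive integers, so $R_i(C_k;\Q)$ is the space of restrictions to $\cZ_-(C_k)$ of polynomials of degree at most $i$, and the powers $1,\eta,\eta^2,\dots,\eta^{\min(i,\,k-2)}$ form a $\Q$-basis of it; since by definition $R_i(C_k;\Z)=R_i(C_k;\Q)\cap R(C_k;\Z)$, the subgroup $R_i(C_k;\Z)$ is saturated in $R(C_k;\Z)$ and is precisely the saturation of the full-rank sublattice $\sum_{j\le i}\Z\,\eta^j$.

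With these in place, the inclusion $\theta^* R_i(C_k;\Z)\subseteq F_i$ drops out: because $\theta^*$ is a ring homomorphism and $\theta^*(\eta)\in F_1$ by Lemma~\ref{heart}, the multiplicativity $F_a\cdot F_b\subseteq F_{a+b}$ gives $\theta^*(\eta^j)=\theta^*(\eta)^j\in F_j\subseteq F_i$ for every $j\le i$, while $\theta^*(1)\in F_0$; thus $F_i$ contains $\sum_{j\le i}\Z\,\theta^*(\eta^j)$, and being saturated it contains the saturation of that subgroup, namely $\theta^* R_i(C_k;\Z)$. To promote this to equality I would compare ranks: by Equation~\eqref{IZ-Tutte}, Proposition~\ref{Q-version}, and Proposition~\ref{additive} one has, for each $j$,
\[
\dim_\Q (F_j/F_{j-1})\otimes\Q=\dim_\Q H^{2j}(X(SU(2),C_k);\Q)=\dim_\Q \IZ^j(C_k;\Q)=\dim_\Q R_j(C_k;\Q)/R_{j-1}(C_k;\Q),
\]
and summing over $j\le i$ yields $\rk F_i=\dim_\Q R_i(C_k;\Q)=\rk \theta^* R_i(C_k;\Z)$. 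A saturated subgroup contained in another saturated subgroup of the same finite rank must coincide with it, so $\theta^* R_i(C_k;\Z)=F_i$.

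The genuinely hard step, Lemma~\ref{heart}, is already available, so the only delicate point above is the passage from $\Q$- to $\Z$-coefficients: one must know that both $F_i$ and $R_i(C_k;\Z)$ are honestly saturated (the latter by construction, the former from Theorem~\ref{Rees} plus torsion-freeness of $H^*(X(SU(2),C_k);\Z)$ and exhaustiveness of the equivariant filtration), since it is precisely this that upgrades the rational equality of dimensions to an integral equality of subgroups. It is worth noting that $\eta^i$ itself does \emph{not} generate $F_i/F_{i-1}\cong\IZ^i(C_k;\Z)$ — it is $i!$ times a generator, reflecting the divided-powers structure of Example~\ref{ex:cycle} — which is exactly why the saturation step cannot be dispensed with.
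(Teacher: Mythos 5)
Your proof is correct and uses precisely the same ingredients as the paper's: Lemma~\ref{heart}, multiplicativity of the equivariant filtration, the Tutte-polynomial rank comparison via Proposition~\ref{Q-version}, Equation~\eqref{IZ-Tutte}, and Proposition~\ref{additive}, and torsion-freeness of $F_i/F_{i-1}$ from Proposition~\ref{additive} and Theorem~\ref{Rees}. The only difference is organizational — you establish saturation of $F_i$ up front and deduce the integral inclusion $\theta^* R_i(C_k;\Z)\subseteq F_i$ directly, whereas the paper first proves rational equality and then shows $F_i\subseteq\theta^*R_i(C_k;\Z)$ with finite index before invoking saturation — and you helpfully flag the exhaustiveness of the filtration, which the paper leaves implicit in Theorem~\ref{Rees}.
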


\begin{proof}
We first prove the proposition after tensoring with $\Q$.
We know that $R_1(C_k;\Q)$ is spanned by $1$ and $\eta$, and 
Lemma \ref{heart} tells us that $\theta^* R_1(C_k;\Q)\subset F_1\otimes\Q$.
Since every element of $R_i(C_k;\Q)$ can be written as a polynomial expression of degree $\leq i$ in the elements of $R_1(C_k;\Q)$,
this implies that $\theta^* R_i(C_k;\Q)\subset F_i\otimes\Q$.

By Proposition \ref{Q-version} and Equation \eqref{IZ-Tutte}, we have
$$\sum_{i\geq 0}t^i\dim R_i(C_k;\Q)/R_{i-1}(C_k;\Q) = t^{k-1}T_{\chi_{C_k}^!}(0,t^{-1}) = 1 + t + \cdots + t^{k-2}.$$
Similarly, Proposition \ref{additive} tells us that
$$\sum_{i\geq 0}t^i\dim (F_i\otimes \Q)/(F_{i-1}\otimes\Q) = t^{k-1}T_{C_k}(t^{-1},0) = 1 + t + \cdots + t^{k-2}.$$
Thus the inclusions $\theta^* R_i(C_k,\Q)\subset F_i\otimes\Q$ must be equalities for all $i$.

Now we are ready to prove the statement over the integers.  By our analysis over the rationals, we know that $F_i$ is a finite index
sublattice of $\theta^* R_i(C_k;\Z)$.  Proposition \ref{additive} and Theorem \ref{Rees} tell us that $F_i/F_{i-1}$ is torsion-free for all $i$, which implies that
$F_i$ is saturated for all $i$.  Thus the inclusion of $F_i$ into $\theta^* R_i(C_k;\Z)$ must be an equality.
\end{proof}

\subsection{The general case}\label{sec:proof}
We are now ready to prove Theorem \ref{cohomology} for arbitrary graphs.  As explained at the end of Section \ref{sec:homotopy}, it is sufficient to prove
that $F_i = \theta^*R_i(\Gamma;\Z)$ for all $i$.

\begin{proof}[Proof of Theorem \ref{cohomology}]
Our first task is to formulate and prove the analogue of Lemma \ref{heart}.
The group $R_1(\Lambda,\Z)\subset R(\Lambda,\Z)$ is spanned by the unit $1$ and the elements
$\{\a_C\mid \text{$C$ an oriented cycle}\}$ defined in Section \ref{sec:cographical}.
Let $\eta_C\in R_1(\Gamma;\Z)$ be the restriction of $\a_C$ to $\cZ_-(\Gamma)$.
Proposition \ref{saturation} tells us that $R_1(\Gamma;\Z)$ is spanned by the unit 1 and the elements 
$\{\eta_C\mid \text{$C$ an oriented cycle}\}$.
The analogue of Lemma \ref{heart} is the statement that, for all $C$, \begin{equation}\label{analogue}\theta^*\eta_C\in F_1\subset H^0(X(U(1),\Gamma);\Z).\end{equation}

Let $k = |C|$.  The oriented cycle $C$ defines an inclusion of $C_k$ into $\Gamma$.  This induces restriction
maps $\mu:X(U(1),\Gamma)\to X(U(1),C_k)$ and $\nu:\cZ_-(\Gamma)\to \cZ_-(C_k)$, which in turn induce pullback
maps $\mu^*:H^0(X(U(1),C_k); \Z)\to H^0(X(U(1),\Gamma); \Z)$ and $\nu^*:R(C_k,\Z)\to R(\Gamma;\Z)$.
Since $\mu$ is the restriction to the fixed point set of a $\T$-equivariant map $X(SU(2),\Gamma)\to X(SU(2),C_k)$, the map $\mu^*$ is compatible with the equivariant filtrations.

Consider the commutative diagram
\[
\begin{tikzcd}
R(C_k;\Z) \ar[rr, "\theta_{C_k}^*"]\ar[dd, "\nu^*"] && H^0(X(U(1),C_k);\Z)\ar[dd, "\mu^*"]\\ \\
R(\Gamma;\Z) \ar[rr, "\theta^*"] && H^0(X(U(1),\Gamma);\Z).
\end{tikzcd}
\]
We have $$\theta^*(\eta_C) = \theta_\Gamma^*(\nu^*\eta) = \mu^*(\theta_{C_k}^*\eta).$$
Lemma \ref{heart} tells us that $\theta_{C_k}^*\eta$ is in the first filtered piece of $H^0(U(1),C_k);\Z)$.
Since the map $\mu^*$ respects filtrations, this implies that $\mu^*(\theta_{C_k}^*\eta)$ is in the first filtered piece of $H^0(U(1),\Gamma);\Z)$.
This completes the proof of Equation \eqref{analogue}.

The remainder of the proof is basically the same as the proof of Proposition \ref{cycle case}; we will briefly summarize it here.
The first step is to prove that $F_i \otimes \Q = \theta^* R_i(\Gamma;\Q)$.
We know that $R_1(\Gamma;\Q)$ is spanned by the classes $\eta_C$, thus Equation \eqref{analogue} implies that $\theta^* R_1(\Gamma;\Q) \subset F_1\otimes\Q$,
and therefore that $\theta^* R_i(\Gamma;\Q) \subset F_i\otimes\Q$.
By Proposition \ref{Q-version}, Equation \eqref{IZ-Tutte}, and Proposition \ref{additive}, we have
$$\sum_{i\geq 0}t^i\dim R_i(\Gamma;\Q)/R_{i-1}(\Gamma;\Q) = t^{\rk(\Gamma)-1}T_{\chi_{\Gamma}^!}(0,t^{-1}) = t^{\rk(\Gamma)-1} T_{\Gamma}(t^{-1},0)
= \sum_{i\geq 0}t^i\dim (F_i\otimes \Q)/(F_{i-1}\otimes\Q),$$ thus the inclusion of $\theta^* R_i(\Gamma;\Q)$ into $F_i\otimes\Q$ must be an equality for all $i$.
Having established the theorem over the rationals, we can now conclude that $F_i$ is a finite index sublattice of $\theta^*R_i(\Gamma;\Z)$.
Proposition \ref{additive} and Theorem \ref{Rees} tell us that $F_i/F_{i-1}$ is torsion-free for all $i$, therefore the inclusion of $F_i$ into $\theta^* R_i(\Gamma;\Z)$ must also be an equality.
\end{proof}

\bibliographystyle{amsalpha}
\bibliography{symplectic}

\end{document}